\newtheorem{theorem}{Theorem}[section]
\newtheorem{lemma}[theorem]{Lemma}
\newtheorem{proposition}[theorem]{Proposition}
\newtheorem{convention-proposition}[theorem]{Convention-Proposition}
\theoremstyle{definition}
\newtheorem{example}[theorem]{Example}
\newtheorem{convention}[theorem]{Convention}
\theoremstyle{remark}
\newtheorem{remark}[theorem]{Remark}
\numberwithin{equation}{section}
\title{On the Non-monotonicity of entropy for a class of real quadratic rational maps}
\author{Khashayar Filom and Kevin M. Pilgrim}
\date{June 2020}
\address{Khashayar Filom, Department of Mathematics, Northwestern University;
}
\email{khashayarfilom2014@u.northwestern.edu}
\address{Kevin M. Pilgrim, Department of Mathematics, Indiana University;
}
\email{pilgrim@indiana.edu}
\begin{document}

\begin{abstract}
We prove that the entropy function on the moduli space of real quadratic rational maps is not monotonic by exhibiting a continuum of disconnected level sets. This entropy behavior  is in stark contrast with the case of polynomial maps, and establishes a conjecture on the failure of monotonicity for bimodal real quadratic rational maps of shape $(+-+)$ which was posed in \cite{2019arXiv190103458F} based on experimental evidence.  
\end{abstract}

\maketitle
%\tableofcontents

\section{Introduction}\label{intro}
The variation of entropy in a family of dynamical systems is a natural indication of the change of dynamics through the family that could shed light on the nature of bifurcations.  There is a vast literature on the entropy behavior of interval maps. In particular, Milnor's conjecture on the \textit{monotonicity of entropy} claims that the entropy level sets -- the \textit{isentropes} --  are connected within families of polynomial interval maps whose critical points are all real \cite{MR3289915}. The monotonicity of entropy for polynomial interval maps was first established for quadratic polynomials \cite{MR762431,MR970571,MR1351519}. In case of the logistic family 
\begin{equation}\label{quadratic family}
\left\{x\mapsto\mu x(1-x): [0,1] \to [0,1]\right\}_{0\leq \mu\leq 4},
\end{equation}
this monotonicity result states that the entropy is non-decreasing with respect to the parameter $\mu$. Next, the monotonicity conjecture was settled in the case of cubic polynomials in 
\cite{MR1351522,MR1736945}. In the general setting of boundary-anchored polynomial interval maps of a fixed degree and shape and with real non-degenerate critical points, the monotonicity of entropy has been established in \cite{MR3264762}, and also in \cite{MR3999686} with a different method.  

In this paper we are concerned with rational maps rather than polynomials. After some effort one can still set up an entropy function on an appropriate moduli space of real rational maps \cite{2018arXiv180304082F}. When the degree is two, the space $\mathcal{M}_2(\Bbb{R})$ of Möbius  conjugacy classes of real quadratic rational maps may be naturally identified with the real plane $\Bbb{R}^2$ \cite[\S10]{MR1246482}. Assigning to each conjugacy class $\langle f\rangle$ the topological entropy of the restriction to the real circle $\hat{\Bbb{R}}:=\Bbb{R}\cup\{\infty\}$ of a representative $f$ defines a continuous \textit{real entropy function} 
\begin{equation}\label{real entropy 1}
h_{\Bbb{R}}:\langle f\rangle\mapsto h_{\rm{top}}\left(f\restriction_{\hat{\Bbb{R}}}\hat{\Bbb{R}}\rightarrow\hat{\Bbb{R}}\right)\in\left[0,\log(2)\right]
\end{equation}
on an appropriate open subset of $\mathcal{M}_2(\Bbb{R})$. The numerically generated entropy contour plots in \cite{2019arXiv190103458F} suggested that the isentropes are connected in certain dynamically defined regions of the moduli space whereas are disconnected in another region of dynamical interest; namely, the region of \textit{$(+-+)$-bimodal}  maps. The former was partially resolved in that paper (\cite[Theorem 1.2]{2019arXiv190103458F}) while the non-monotonicity part was stated merely as a conjecture (\cite[Conjecture 1.4]{2019arXiv190103458F}). The main goal of this paper is to establish this anticipated failure of monotonicity; see Theorem \ref{main} below. We prove the non-monotonicity of the real entropy function $h_\Bbb{R}$ by studying certain $(+-+)$-bimodal real quadratic rational maps. Our arguments easily imply the non-monotonicity for the restriction of $h_\Bbb{R}$ to the  $(+-+)$-bimodal region as well.  
\begin{theorem}\label{main}
There exists a real number $h'\in\left(0,\log(2)\right)$ with the property that for every entropy value $h\in\left(h',\log(2)\right)$ the level set $h_\Bbb{R}=h$ is disconnected.
\end{theorem}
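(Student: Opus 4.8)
The plan is to localize the problem to a combinatorially tractable slice of the $(+-+)$-bimodal locus $\mathcal{B}\subset\mathcal{M}_2(\mathbb{R})\cong\mathbb{R}^2$, to reduce the computation of $h_{\mathbb{R}}$ there to the kneading theory of an associated interval map, and then to exploit the fact that this reduction lands in a \emph{non}-boundary-anchored family — so that Milnor-type monotonicity is genuinely allowed to fail — in order to produce two ``peaks'' of maximal entropy separated by a low-entropy ``wall''.

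First I would fix a real-analytic normal form for representatives $f$ of classes in $\mathcal{B}$, so that $f\restriction_{\hat{\mathbb{R}}}$ has an orientation-preserving fixed point $q$ (the seam of the $(+-+)$ shape), a local maximum $c_1$ with critical value $v_1=f(c_1)$, and a local minimum $c_2$ with critical value $v_2=f(c_2)$, occurring in this cyclic order, and I would take the pair $(v_1,v_2)$ of (necessarily distinct) critical values as coordinates on an open piece $\mathcal{B}_0\subseteq\mathcal{B}$, with image an open domain $D\subset\mathbb{R}^2$ chosen small enough that the forward-invariant ``core'' $K=[v_2,v_1]$ carries all of the entropy: $h_{\mathbb{R}}(\langle f\rangle)=h_{\mathrm{top}}(f\restriction_K)$, with $f\restriction_K$ a bimodal interval map of shape $(+-+)$. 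The structural point that drives everything is that $f\restriction_K$ sends the endpoints $v_1,v_2$ of $K$ to \emph{interior} points $f(v_1)=f^2(c_1)$ and $f(v_2)=f^2(c_2)$: it is not boundary-anchored, and the locations of these images are slaved, through the rational structure of $f$, to $(v_1,v_2)$ — in sharp contrast with the Milnor--Tresser cubic family, where the analogous boundary values are pinned at the endpoints. The entropy is then expressed, via the Milnor--Thurston kneading determinant for two-turning-point maps, as a continuous function $H(v_1,v_2)$ on $D$ that is monotone along each coordinate sweep.

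Next I would exhibit, by explicit postcritically finite representatives whose real dynamics is Markov, two sub-loci $M_1,M_2\subset D$ on each of which $H=\log(2)$, one realizing maximal complexity of the fold at $c_1$ and the other of the fold at $c_2$; and then produce a \emph{separating wall} $Z\subset D$ — an explicit analytic arc along which a critical-orbit relation binds \emph{both} turning points at once — with the property that $H\restriction_Z\leq h'$ for some explicit $h'\in(0,\log(2))$, and such that $Z$ cuts $D$ into two pieces with $M_1$ and $M_2$ on opposite sides. Granting this, fix $h\in(h',\log(2))$: by continuity of $H$, the fact that $\sup_D H=\log(2)$ is attained on each side of $Z$, and the intermediate value theorem along arcs joining $Z$ to $M_i$, the level set $\{h_{\mathbb{R}}=h\}$ meets both components of $D\setminus Z$; since $h_{\mathbb{R}}\leq h'<h$ on $Z$, it is disjoint from $Z$; hence it splits into two nonempty pieces that are open and closed in it, and is disconnected within $\mathcal{B}_0$. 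This already gives Theorem~\ref{main} for the restriction of $h_{\mathbb{R}}$ to the $(+-+)$-region. For the statement as phrased one must additionally rule out that the two pieces of the isentrope get rejoined through $\mathcal{M}_2(\mathbb{R})\setminus\mathcal{B}_0$; I would do so by enlarging $Z$ with adjacent parts of $\partial\mathcal{B}_0$, and of the boundary strata of $\mathcal{B}$, on which the entropy is likewise at most $h'$, to obtain a single separating set for the whole domain of $h_{\mathbb{R}}$.

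I expect the difficulty to be concentrated in two places. The computational backbone — the identity $h_{\mathbb{R}}=H(v_1,v_2)$ together with the monotonicity of $H$ — requires the kneading calculus for maps with two turning points, which is substantially more delicate than the unimodal case, plus control of the geometry of the core $K$ and of the orbits of $c_1,c_2$ throughout $D$. But the genuine crux is the estimate $H\restriction_Z\leq h'$ with $h'<\log(2)$ \emph{strictly}: this is the precise, quantitative form of the assertion that a $(+-+)$ quadratic rational map, unlike a cubic polynomial, cannot be combinatorially as complex as possible near both turning points simultaneously, and it is the only point at which the rational (as opposed to polynomial) nature of the maps is essential. Establishing this estimate, and then the global confinement needed to pass from the slice to all of $\mathcal{M}_2(\mathbb{R})$, are what I would expect to demand the most work.
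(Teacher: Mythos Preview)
Your high-level strategy --- build a low-entropy ``wall'' that separates two regions where the entropy is forced above some threshold, then apply the intermediate value theorem --- is exactly the skeleton of the paper's proof. But what you have written is a plan, not a proof: you correctly identify the crux as the estimate $H\restriction_Z\leq h'$ on an explicit separating arc $Z$, and then leave it unproved. That gap is real. Your proposed $Z$ is an analytic arc inside a bounded slice $D$; to separate the level set $\{h_\Bbb{R}=h\}$ in the whole moduli space you must extend $Z$ through $\partial D$ and beyond while keeping the entropy bound, and you offer no mechanism for this. Since the $(+-+)$-locus where $h_\Bbb{R}=\log 2$ is in fact a single connected piece (the region below ${\rm Per}_1(1)$), any bounded wall with entropy strictly below $\log 2$ cannot by itself separate two $\log 2$ sub-loci $M_1,M_2$ globally; the separation has to happen ``at infinity''.

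The paper sidesteps both difficulties by replacing your kneading-theoretic analysis with structural facts about hyperbolic components. Via Thurston realization it constructs, for each reduced $p/q$, a PCF center $f_{p/q}$ of a real bitransitive component $\mathcal{H}_{p/q}$ on which $h_\Bbb{R}$ is identically an explicit value $h_q$ depending only on $q$. The wall $L$ is then the curve $\{\langle f_{1/3,t}\rangle\}_{t\in[0,1)}\subset\mathcal{H}_{1/3}$ (on which $h_\Bbb{R}\equiv h_3=:h'$) glued to the portion of the post-critical line $\sigma_1=-6$ above $\langle f_{1/3}\rangle$ (on which $h_\Bbb{R}\leq h_3$ by the monotonicity result of Levin--Shen--van Strien for the one-parameter family there). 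The separation argument is made in the disk compactification $\overline{\mathcal{M}_2(\Bbb{R})}$: a Petersen-type multiplier estimate shows that the curve in $\mathcal{H}_{p/q}$ lands at the ideal point ${\rm e}^{2\pi{\rm i}p/q}$ on the boundary circle, so for large $q$ one can choose $p$ with $1/q<1/3<p/q<1/2$ and obtain two curves of entropy $h_q>h_3$ landing on opposite sides of the ideal endpoint ${\rm e}^{2\pi{\rm i}/3}$ of $L$. Thus the paper's ``wall'' is unbounded and its effectiveness is certified by where things land at infinity, not by a direct kneading estimate; and the high-entropy witnesses are entire hyperbolic components with computable constant entropy, not $\log 2$ peaks. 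Your outline could perhaps be made to work, but it would need a genuine substitute for these two ingredients.
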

\noindent 
In fact, we can take $h'$  to be the logarithm of the largest real root of $t^3-2t^2+1=0$:
\begin{equation}\label{h'}
h'=\log\left(\frac{1+\sqrt{5}}{2}\right).
\end{equation}

The main idea of the proof is to construct certain unbounded hyperbolic components -- denoted by $\mathcal{H}_{p/q}$ -- of $\mathcal{M}_2(\Bbb{R})$, and then to use the elementary fact that the entropy remains constant throughout any real hyperbolic component. After a brief review based on \cite{MR1047139,MR1246482} of the background material on the moduli space of quadratic rational maps and its hyperbolic components in \S\ref{background}, we construct such unbounded hyperbolic components  in \S\ref{construction}. The main ingredient of the construction is to exhibit certain \textit{post-critically finite} (\textit{PCF} for short) real hyperbolic rational maps $f_{p/q}$ with a specified dynamics on $\hat{\Bbb{R}}$ which lie at the \textit{center} of the aforementioned hyperbolic components $\mathcal{H}_{p/q}$. The construction, a special case of \cite{MR1609463}, is topological and  utilizes  \textit{Thurston's characterization of rational maps} \cite{MR1251582}. 
Next, we proceed in \S\ref{construction} with an analysis of the limit points of $\mathcal{H}_{p/q}$ in a compactification of $\mathcal{M}_2(\Bbb{R})\cong\Bbb{R}^2$ to a closed disk. The analysis of the degeneration of  hyperbolic components $\mathcal{H}_{p/q}$  is reminiscent of ideas developed in \cite{MR2691488,MR1914001}, and also relies on \cite{MR1257034}.   The proof of the main theorem finally appears in \S\ref{proof}, and utilizes the properties of components  $\mathcal{H}_{p/q}$ discussed in the previous section, an entropy monotonicity result of Levin, Shen, and van Strien  \cite[Theorem 7.2]{2019arXiv190206732L}, and planar topology arguments. 

\textbf{Acknowledgments.} K.F. is grateful to Laura DeMarco for helpful conversations and suggestions, to Yan Gao for introducing him to paper \cite{2019arXiv190206732L}, and to the department of mathematics at Indiana University Bloomington for its hospitality during visits in September 2018 and May 2019. K.M.P. was supported by Indiana University Bloomington and Simons Foundation collaboration grant No. 245269.

\section{Background on the Moduli Space and Hyperbolic Components of Quadratic Rational Maps}\label{background}
The goal of this section is to present a brief account of the moduli space of (real or complex) quadratic rational maps including the dynamical coordinate system that identifies the moduli space with a plane, the corresponding compactifications, the seven different topological types of a real quadratic rational map, the experimental evidence on which Theorem \ref{main} is based; and finally, the hyperbolic components of quadratic rational maps which are vital to the proof of the theorem.    

The complex moduli space $\mathcal{M}_2(\Bbb{C})$ of quadratic rational maps is defined as the space
$${\rm{Rat}}_2(\Bbb{C})\big/{\rm{PSL}}_2(\Bbb{C})=\left\{f \mid f \text{ a rational map of degree two}\right\}\big/f\sim\alpha\circ f\circ\alpha^{-1}$$  
of Möbius conjugacy classes of rational maps
 $f:\hat{\Bbb{C}}\rightarrow\hat{\Bbb{C}}$ of degree two. The conjugacy class of $f$ is denoted by   
 $\langle f\rangle\in\mathcal{M}_2(\Bbb{C})$. This space could famously be identified with the plane $\Bbb{C}^2$  \cite{MR1246482}. To elaborate, recall that such a map $f$ has three fixed points (counted with multiplicity) whose multipliers -- denoted by $\mu_1$, $\mu_2$ and $\mu_3$ -- are related by the \textit{holomorphic fixed point formula}
(see \cite[\S12]{MR2193309})
\begin{equation}\label{fixed point formula}
\frac{1}{1-\mu_1}+\frac{1}{1-\mu_2}+\frac{1}{1-\mu_3}=1.
\end{equation} 
This amounts to a constraint on the symmetric functions 
\begin{equation}\label{symmetric functions}
\sigma_1=\mu_1+\mu_2+\mu_3,\quad \sigma_2=\mu_1\mu_2+\mu_2\mu_3+\mu_3\mu_1,\quad \sigma_3=\mu_1\mu_2\mu_3,
\end{equation}
of these multipliers given by $\sigma_3=\sigma_1-2$. The conjugacy-invariant functions $\sigma_1$ and $\sigma_2$  then identify $\mathcal{M}_2(\Bbb{C})$ with $\Bbb{C}^2$.

As we are concerned with the dynamics on the real line union infinity $\hat{\Bbb{R}}$, only the conjugacy classes of real quadratic rational maps $f\in{\rm{Rat}}_2(\Bbb{R})$ under the action of ${\rm{PGL}}_2(\Bbb{R})$ are relevant to our discussion.  The functions $\sigma_i$'s may be described in terms of coefficients of the rational map $f$. For instance, in the \textit{mixed normal form}
\begin{equation}\label{mixed normal form}
\frac{1}{\mu}\left(z+\frac{1}{z}\right)+a
\end{equation}
where the critical points and a fixed point are specified, $\sigma_1$ and $\sigma_2$ are given by the  formulas below 
adapted from \cite[Appendix C]{MR1246482}:
\begin{equation}\label{coordinates}
\begin{cases}
\sigma_1=\mu(1-a^2)-2+\frac{4}{\mu}\\
\sigma_2=\left(\mu+\frac{1}{\mu}\right)\sigma_1-\left(\mu^2+\frac{2}{\mu}\right)
\end{cases}.
\end{equation}
Therefore, $\sigma_1$ and $\sigma_2$ are real once $f$ lies in ${\rm{Rat}}_2(\Bbb{R})$.  Conversely, any point of $\mathcal{M}_2(\Bbb{C})\cong\Bbb{C}^2$ with real coordinates can be represented by a real map: If the multiplier $\mu$ in \eqref{coordinates} is real (that is, a real root of  the real cubic $z^3-\sigma_1z^2+\sigma_2z-\sigma_3=0$), the real-ness of $\sigma_1$ and $\sigma_2$ requires $a$ to be either real, or purely imaginary of the form ${\rm{i}}b$ with $b$ real. In the latter situation, after a conjugation with $z\mapsto\frac{z}{{\rm{i}}}$ we arrive at a real map of the form 
\begin{equation}\label{mixed normal form-alternative}
\frac{1}{\mu}\left(z-\frac{1}{z}\right)+b.
\end{equation}
We deduce that the space 
$$
\mathcal{M}_2(\Bbb{R})={\rm{Rat}}_2(\Bbb{R})\big/{\rm{PSL}}_2(\Bbb{R})
$$
of the conjugacy classes of real maps could be identified with the underlying real plane $\Bbb{R}^2$ \cite[\S10]{MR1246482}. 
Figure \ref{fig:main} adapted from Milnor's paper illustrates the moduli space $\mathcal{M}_2(\Bbb{R})$ in the 
$(\sigma_1,\sigma_2)$ coordinate system.
The paper then proceeds with a careful examination of $\mathcal{M}_2(\Bbb{R})$  based on the real dynamics which we shall review below:
\begin{itemize}
\item The restriction $f\restriction_{\hat{\Bbb{R}}}:\hat{\Bbb{R}}\rightarrow\hat{\Bbb{R}}$ is either a two-sheeted covering map or is not surjective in which case both critical points of $f$ are real, and the topological degree of $f\restriction_{\hat{\Bbb{R}}}:\hat{\Bbb{R}}\rightarrow\hat{\Bbb{R}}$ is zero \cite[Proposition 2.4]{2019arXiv190103458F}.
\item In the case of topological degree zero, the image of $f$ is a compact interval $f(\hat{\Bbb{R}})$. From the dynamical standpoint, one can solely concentrate on the interval map
$$
f\restriction_{f(\hat{\Bbb{R}})}:f(\hat{\Bbb{R}})\rightarrow f(\hat{\Bbb{R}}).
$$
Conditioning on its modality (how many critical points lie in $f(\hat{\Bbb{R}})$) and shape (whether it starts with an increase or a decrease), one obtains the smaller \textit{monotone increasing}, \textit{monotone decreasing}, \textit{unimodal}, $(+-+)$-\textit{bimodal} and $(-+-)$-\textit{bimodal} regions within the non-covering part of $\mathcal{M}_2(\Bbb{R})$. These along with  \textit{degree} $\pm2$ regions comprise the seven regions partitioning the moduli space $\mathcal{M}_2(\Bbb{R})$ in Figure \ref{fig:main}.
\item The degree $\pm 2$ regions are separated from the union of the other five regions -- which we call the  \textit{component of degree zero maps} -- via the real part of the \textit{symmetry locus}  
$$
\mathcal{S}(\Bbb{R})=\left\{f\in {\rm{Rat}}_2(\Bbb{R}) \mid f \text{ has a non-trivial  Möbius automorphism} \right\};
$$
see Figure \ref{fig:maincolored}. 
The symmetry locus is thoroughly studied in \cite[\S5]{MR1246482}. The real entropy \eqref{real entropy 1} is multi-valued at the points of $\mathcal{S}(\Bbb{R})$ as they represent maps of the form $\frac{1}{\mu}(z\pm\frac{1}{z})$ which are conjugate only via non-real Möbius transformations and thus, restrict to dynamically distinct self-maps of $\hat{\Bbb{R}}$: The map $x\mapsto\frac{1}{\mu}\left(x-\frac{1}{x}\right)$ is covering and hence of entropy $\log(2)$ whereas  $x\mapsto\frac{1}{\mu}\left(x+\frac{1}{x}\right)$ is of entropy zero \cite[Example 2.2]{2019arXiv190103458F}.
\end{itemize}

\begin{figure}[ht!]
\center
\includegraphics[width=13cm]{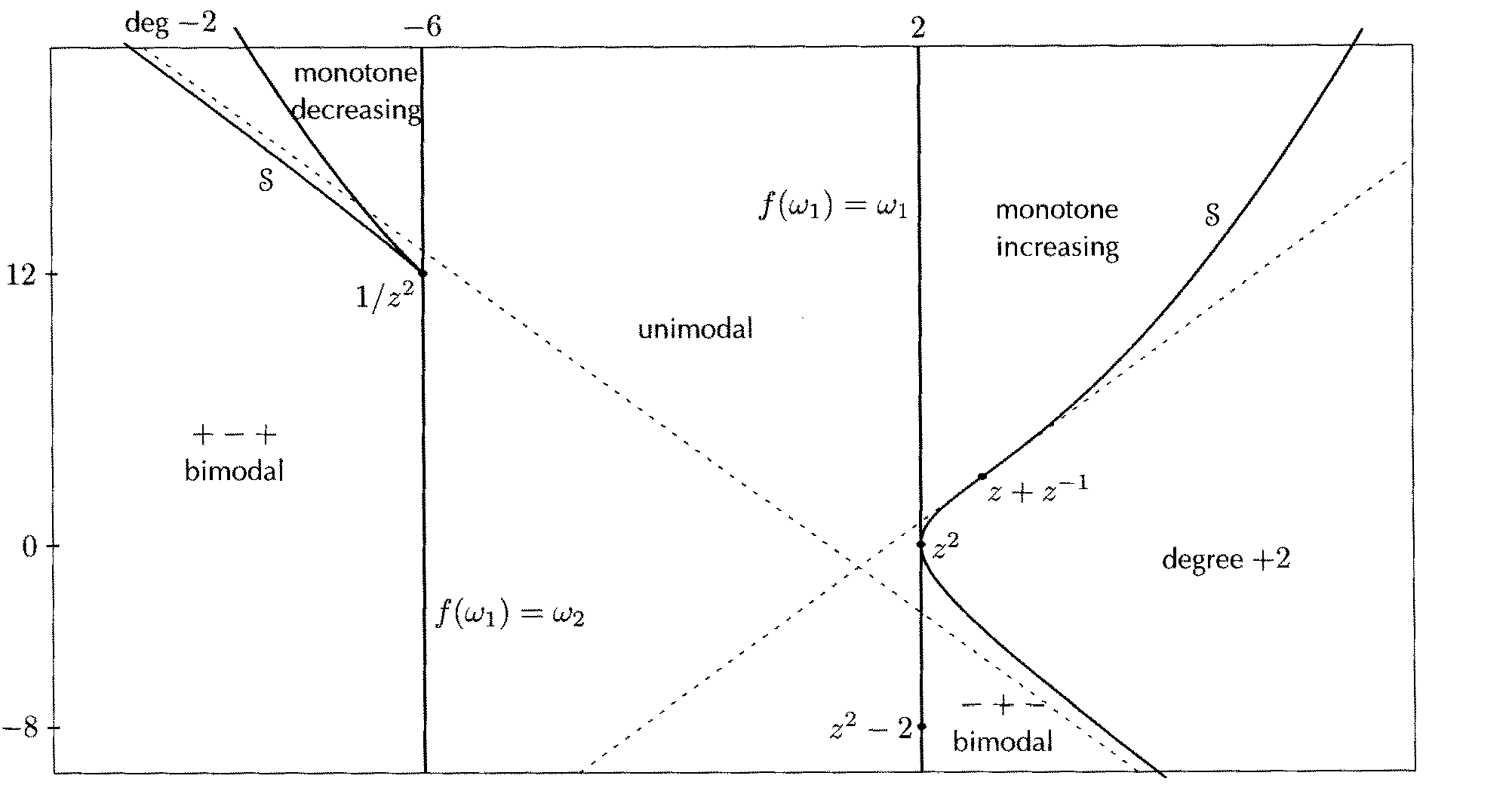}
\caption{The real moduli space $\mathcal{M}_2(\Bbb{R})$ as illustrated in  \cite[Figure 15]{MR1246482}. The post-critical lines $\sigma_1=-6,2$ ($\omega_1,\omega_2$ denote the critical points of $f$ here), the dotted lines ${\rm{Per}}_1(\pm 1)$, the real symmetry locus $\mathcal{S}(\Bbb{R})$ and the partition into seven regions according to the various types of the dynamics induced on $\hat{\Bbb{R}}$ are shown. The component of degree zero maps in 
$\mathcal{M}_2(\Bbb{R})-\mathcal{S}(\Bbb{R})$ is the union of monotonic, unimodal and bimodal regions that overlap only along the lines $\sigma_1=-6,2$.}
\label{fig:main}
\end{figure}

After excluding the symmetry locus, the real entropy \eqref{real entropy 1} gives rise to a single-valued function 
\begin{equation}\label{real entropy function}
\begin{cases}
h_\Bbb{R}:\mathcal{M}_2(\Bbb{R})-\mathcal{S}(\Bbb{R})\rightarrow\left[0,\log(2)\right]\\
h_{\Bbb{R}}(\langle f\rangle):=h_{\rm{top}}\left(f\restriction_{\hat{\Bbb{R}}}:\hat{\Bbb{R}}\rightarrow\hat{\Bbb{R}}\right)=
h_{\rm{top}}\left(f\restriction_{f(\hat{\Bbb{R}})}:f(\hat{\Bbb{R}})\rightarrow f(\hat{\Bbb{R}})\right)
\end{cases}
\end{equation}
which is continuous \cite{MR1372979}. The domain of definition has three connected components; see Figure \ref{fig:maincolored}. In our treatment of the monotonicity problem, only unimodal and bimodal regions matter as $h_{\Bbb{R}}\equiv\log(2)$ over the degree $\pm 2$ components of the domain while $h_{\Bbb{R}}\equiv 0$ over the monotonic regions of the component of degree zero maps. A rather lengthy analysis of the dynamics in the degree zero case reduces everything to the study of certain families of unimodal and bimodal interval maps for which entropy plots could be generated numerically \cite[\S\S 4,5]{2019arXiv190103458F}. It is observed that the entropy level sets appear disconnected for $(+-+)$-bimodal maps (see Figure \ref{fig:plot5}) while they appear connected throughout the adjacent unimodal and $(-+-)$-bimodal regions. Proving the former is the main goal of this paper, and the latter is partially established in \cite[Theorem 1.2]{2019arXiv190103458F}: The restriction of $h_\Bbb{R}$ to the part of Figure \ref{fig:main} which lies below the dotted line 
\begin{equation}\label{the line}
{\rm{Per}}_1(1): \sigma_2=2\sigma_1-3
\end{equation}
is monotonic. This line is dynamically significant as it is where one of the fixed points becomes parabolic. Maps of degree zero below it possess three real fixed points with one of them attracting.  In particular,  the entropy is monotonic throughout the entirety of the $(-+-)$-bimodal region where the dynamics is restricted due to the presence of an attracting fixed point \cite[Lemma 10.1]{MR1246482}.  On the contrary, the fixed points of the $(+-+)$-maps which we shall construct in the next section are repelling (and also not all real). Therefore, their dynamics is \textit{essentially non-polynomial} in the sense of \cite{MR1806289}.

\begin{figure}[ht]
\includegraphics[width=15cm]{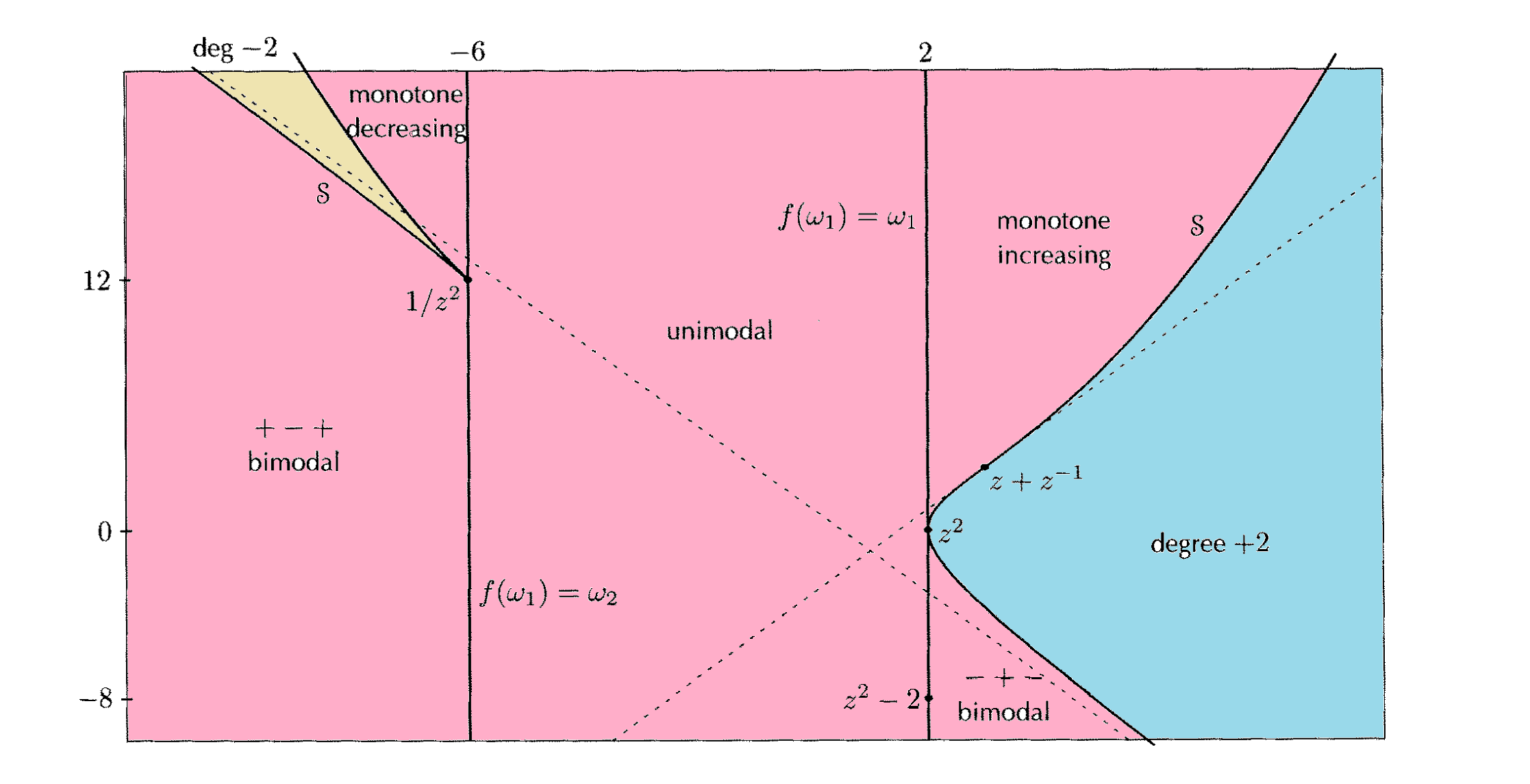}
\caption{A colored version of Figure \ref{fig:main}. The complement in $\mathcal{M}_2(\Bbb{R})$ of the symmetry locus admits three connected components corresponding to possible topological degrees of the restriction
$f\restriction_{\hat{\Bbb{R}}}:\hat{\Bbb{R}}\rightarrow\hat{\Bbb{R}}$ of a quadratic rational map $f$ with real coefficients. If the degree is $\pm 2$, the restriction is a covering map of entropy $\log(2)$. The entropy behavior in the component of degree zero maps (in pink) is far more interesting. }
\label{fig:maincolored}
\end{figure}

\begin{figure}[ht!]
\center
\includegraphics[width=12cm, height=12cm]{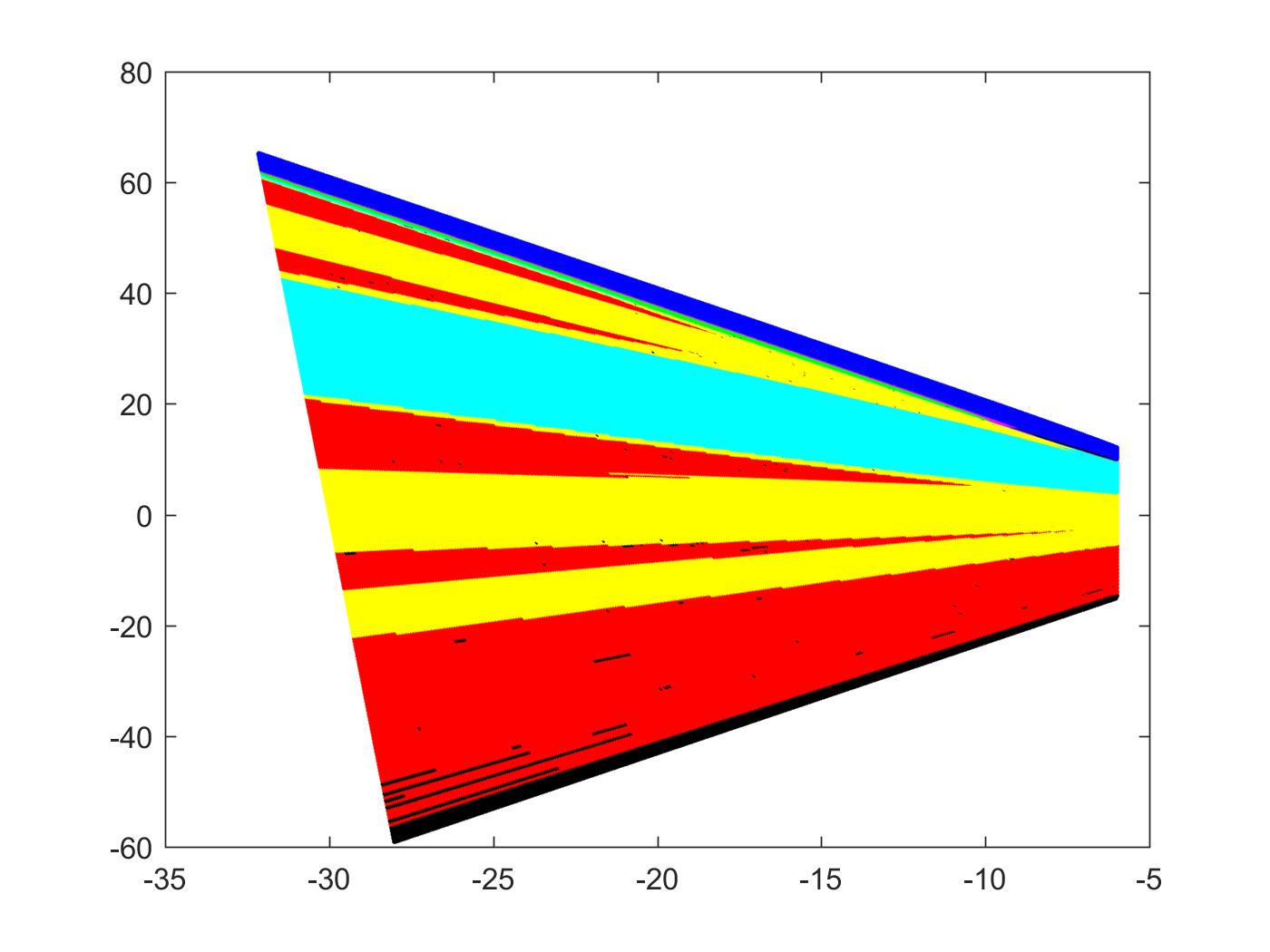}
\caption{An entropy contour plot in the $(+-+)$-bimodal region of the real moduli space (the $(\sigma_1,\sigma_2)$-plane, cf. Figure \ref{fig:main}) adapted from \cite{2019arXiv190103458F}. 
Here the  colors blue, magenta, green, cyan, yellow and red correspond to the entropy being in intervals 
$[0,0.05)$, $[0.05,0.2)$, $[0.2,0.3)$, $[0.3,0.5)$, $[0.5,0.66)$ and $[0.66,\log(2)\approx 0.7]$ respectively. 
The plot is generated utilizing the algorithm introduced in \cite{MR1151977}; and black indicates 
the failure of that algorithm in calculating the entropy. The right vertical boundary line is the post-critical line $\sigma_1=-6$  which intersects the lower skew boundary line ${\rm{Per}}_1(1): \sigma_2=2\sigma_1-3$ (both of them visible in Figure \ref{fig:main}). For $(+-+)$-bimodal maps below this line the Julia set is completely real and the real entropy is $\log(2)$ 
\cite[\S4]{2019arXiv190103458F}. The real entropy tends to zero as we tend to the upper boundary which is part of the symmetry locus.} 
\label{fig:plot5}
\end{figure}

\begin{remark}
The  disconnectedness of the  domain of the entropy function \eqref{real entropy function} suggests that one should phrase the question of monotonicity of $h_\Bbb{R}$ for level sets in just one component of the domain. But as mentioned above, $h_\Bbb{R}\equiv\log(2)$ for maps of degree $\pm 2$; and in Theorem \ref{main} we are dealing with entropy values in 
$\left(0,\log(2)\right)$. Therefore, we  focus on isentropes in the component of degree zero maps hereafter.   
\end{remark}

Before proceeding with a discussion of hyperbolic components in $\mathcal{M}_2(\Bbb{R})$, we point out that symmetric functions \eqref{symmetric functions} of the multipliers  could also be used to describe certain compactifications of moduli spaces $\mathcal{M}_2(\Bbb{R})$ and $\mathcal{M}_2(\Bbb{C})$. We shall need such compactifications in \S\ref{proof}.  
One could compactify $\mathcal{M}_2(\Bbb{C})\cong\Bbb{C}^2$ to the complex projective plane  $\widehat{\mathcal{M}_2(\Bbb{C})}\cong\Bbb{CP}^2$ in which case the added points correspond to degenerate limits of families of quadratic rational maps \cite[\S4]{MR1246482}. To be more precise, notice that if one of the multipliers in \eqref{fixed point formula}, say $\mu_3$, tends to infinity, the product $\mu_1\mu_2$ of the other two tends to $1$; and 
\begin{equation}\label{ratio}
\frac{\sigma_2}{\sigma_1}=\frac{\mu_1\mu_2+\mu_2\mu_3+\mu_3\mu_1}{\mu_1+\mu_2+\mu_3}
=\frac{\frac{\mu_1\mu_2}{\mu_3}+\mu_2+\mu_1}{\frac{\mu_1}{\mu_3}+\frac{\mu_2}{\mu_3}+1}\to\mu_1+\frac{1}{\mu_1}.
\end{equation}
In case that one of $\mu_1$ or $\mu_2$ becomes unbounded too, the other one must tend to $0$ because of \eqref{fixed point formula}; so the limit in \eqref{ratio} would be infinity. 
We conclude that the points at infinity could be thought of as unordered triples $\left\{\mu,\mu^{-1},\infty\right\}$ where $\mu$ belongs to $\hat{\Bbb{C}}:=\Bbb{C}\cup\{\infty\}$.  The sum $\mu+\mu^{-1}\in\hat{\Bbb{C}}$ now serves as a coordinate parameterizing the subset of points at infinity as a copy of the Riemann sphere. The real moduli space $\mathcal{M}_2(\Bbb{R})$ is the real $(\sigma_1,\sigma_2)$-plane where the slope appeared in \eqref{ratio} is real. Hence the closure $\widehat{\mathcal{M}_2(\Bbb{R})}$ of $\mathcal{M}_2(\Bbb{R})\cong\Bbb{R}^2$  in $\widehat{\mathcal{M}_2(\Bbb{C})}\cong\Bbb{CP}^2$ could be identified with the real projective plane $\Bbb{RP}^2$ since it is obtained by adding a copy of $\Bbb{RP}^1$  (the space of lines in $\Bbb{R}^2$) to $\Bbb{R}^2$. Nevertheless, we shall describe another compactification homeomorphic to a disk which is more convenient to work with. For $\sigma_1$ and $\sigma_2$ real, the corresponding limit points would be triples 
$\left\{\mu,\mu^{-1},\infty\right\}$ in which $\mu$ belongs to either $\hat{\Bbb{R}}$, or to the unit circle due to the fact that $\mu+\mu^{-1}$ is required to be real in view of \eqref{ratio}. Replacing $\mu$ with $\mu^{-1}$ if necessary, one may take $\mu$ to be in the interval $[-1,1]$, or on the half-arc $\left\{e^{{\rm{i}}\theta}\mid \theta\in[0,\pi]\right\}$ respectively. The union of these two is a circle which serves as the boundary of a new compactification $\overline{\mathcal{M}_2(\Bbb{R})}$. This is a closed disk fibered above  $\widehat{\mathcal{M}_2(\Bbb{R})}\cong\Bbb{RP}^2$.  We record this discussion for the future usage. 
   
\begin{proposition}\label{compactification}
Adding the boundary circle
\begin{equation}\label{circle at infinity}
{\rm{S}}^1_\infty:=[-1,1]\bigcup\left\{e^{{\rm{i}}\theta}\mid \theta\in[0,\pi]\right\}
\end{equation}
to $\mathcal{M}_2(\Bbb{R})\cong\Bbb{R}^2$ results in a compactification $\overline{\mathcal{M}_2(\Bbb{R})}$ of the real moduli space $\mathcal{M}_2(\Bbb{R})$ homeomorphic to a closed disk with the following topology: The limit in  $\overline{\mathcal{M}_2(\Bbb{R})}$ of the conjugacy classes of a sequence $\left\{g_n\right\}_{n=1}^\infty$ of real quadratic rational maps that degenerate as $n\uparrow\infty$ is a point $\mu\in{\rm{S}}^1_\infty$ provided that there is a sequence 
$\left\{p_n\right\}_{n=1}^\infty\subset\Bbb{CP}^1$ of fixed points with $g'_n(p_n)\to\mu$. 
\end{proposition}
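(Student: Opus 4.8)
The plan is to verify that the construction described in the preceding paragraph actually produces a compact Hausdorff space homeomorphic to a disk, and that convergence in it is detected by the multiplier criterion stated. First I would set up the map explicitly: the assignment $\langle f\rangle \mapsto \sigma_2/\sigma_1$ (interpreted projectively) extends the pair $(\sigma_1,\sigma_2)$ to a continuous map from $\widehat{\mathcal{M}_2(\Bbb{C})}\cong\Bbb{CP}^2$ that sends the line at infinity to the $\mu+\mu^{-1}$-sphere; restricting to $\widehat{\mathcal{M}_2(\Bbb{R})}\cong\Bbb{RP}^2$ sends the added $\Bbb{RP}^1$ onto the real slopes, i.e.\ onto $\hat{\Bbb{R}}$ in the variable $s=\mu+\mu^{-1}$. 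The disk $\overline{\mathcal{M}_2(\Bbb{R})}$ is then the pullback: one replaces the point $s\in\hat{\Bbb{R}}$ of the boundary $\Bbb{RP}^1$ by the fiber $\{\mu : \mu+\mu^{-1}=s\}\cap\big([-1,1]\cup\{e^{{\rm{i}}\theta}\}\big)$, which is a single point when $|s|\geq 2$ and a pair of complex-conjugate points on the unit circle when $|s|<2$. Thus $\overline{\mathcal{M}_2(\Bbb{R})}$ is the closure of the graph of $s\mapsto$ (this section); concretely, the two-to-one branching over $|s|\geq 2$ versus one-to-one over $|s|<2$ is exactly the branching that converts $\Bbb{RP}^1$ into $\mathrm{S}^1$, so the result is $\Bbb{R}^2$ with a circle glued on, i.e.\ a closed disk.

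The key steps, in order: (i) make precise the topology on $\overline{\mathcal{M}_2(\Bbb{R})}$ by declaring a sequence $\langle g_n\rangle$ with $\|(\sigma_1,\sigma_2)\|\to\infty$ to converge to $\mu\in{\rm{S}}^1_\infty$ iff $\sigma_2/\sigma_1\to\mu+\mu^{-1}$ \emph{and} (to break the $\mu\leftrightarrow\mu^{-1}$ ambiguity only when $\mu\in\pm 1$ is approached, no ambiguity arises) the selection $\mu\in[-1,1]\cup\{e^{{\rm{i}}\theta}:\theta\in[0,\pi]\}$ is used; (ii) check this is well-defined and Hausdorff, and that the identity on $\Bbb{R}^2$ extends continuously, using the computation \eqref{ratio} already carried out in the excerpt; (iii) produce the homeomorphism with the closed disk, e.g.\ by exhibiting an explicit continuous bijection from $\overline{\mathcal{M}_2(\Bbb{R})}$ to $\{z\in\Bbb{C}:|z|\le 1\}$ and invoking compactness of the source (which follows since it is a closed subset of the compact $\Bbb{RP}^2$-fibered space, or directly since it is the one-point-type compactification built above) together with Hausdorffness of the target; (iv) finally, translate the analytic convergence $\sigma_2/\sigma_1\to\mu+\mu^{-1}$ into the dynamical statement: if $g_n$ degenerates and $p_n\in\Bbb{CP}^1$ are fixed points with $g_n'(p_n)\to\mu$, then by \eqref{fixed point formula} the remaining two multipliers satisfy (up to relabeling) one tending to $\mu^{-1}$ and one to $\infty$ — here one uses that a degenerating family must have an unbounded multiplier and invokes \eqref{ratio} — whence $\sigma_2/\sigma_1\to\mu+\mu^{-1}$, giving convergence to the boundary point $\mu$.

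I expect the main obstacle to be step (iv), specifically the bookkeeping needed to guarantee that when a family of real quadratic rational maps degenerates, the multipliers really do organize as $\{\mu_n\to\mu,\ \nu_n\to\mu^{-1},\ \lambda_n\to\infty\}$ with $\mu$ (after possible inversion) landing in the prescribed half-circle, rather than some other partition of limiting behavior; one has to rule out, or correctly account for, the possibility that two multipliers blow up (handled in the excerpt: then the third tends to $0$ and $\sigma_2/\sigma_1\to\infty$, the single boundary point $\mu=\pm 1$), and one has to confirm that the limiting $s=\mu+\mu^{-1}$ depends only on the conjugacy class and not on which representative or which fixed point is chosen to read off $\mu$. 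The honest subtlety is that "degenerate" must be interpreted as "leaves every compact subset of $\mathcal{M}_2(\Bbb{R})$," equivalently $(\sigma_1,\sigma_2)\to\infty$, and one should note this forces at least one multiplier to escape to $\infty$ (since on compact sets of $\mathcal{M}_2(\Bbb{C})$ all three multipliers stay bounded away from the bad locus); granting that, the identity \eqref{fixed point formula} pins down the pairing and the computation \eqref{ratio} finishes the identification, so the proof is essentially a careful assembly of pieces already visible in the excerpt rather than a source of genuinely new difficulty.
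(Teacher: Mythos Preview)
The paper gives no separate formal proof; the proposition is stated as a record of the discussion in the preceding paragraphs, built on the slope computation \eqref{ratio} and the holomorphic fixed-point formula \eqref{fixed point formula}. Your proposal follows exactly that route, so in spirit the approaches coincide.

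There is, however, a concrete error in your first paragraph. The map $\mu\mapsto s=\mu+\mu^{-1}$ from ${\rm S}^1_\infty$ to $\hat{\Bbb R}$ is a \emph{homeomorphism}, not branched: it carries $(0,1]$ onto $[2,\infty)$, $[-1,0)$ onto $(-\infty,-2]$, the point $0$ to $\infty$, and the upper semicircle $\{e^{{\rm i}\theta}:\theta\in[0,\pi]\}$ onto $[-2,2]$, each bijectively. So your phrase ``two-to-one branching over $|s|\ge 2$ versus one-to-one over $|s|<2$'' is wrong, and the mechanism you invoke to ``convert $\Bbb{RP}^1$ into ${\rm S}^1$'' is not present.

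This miscount points at a gap your step (iii) does not close (and which the paper's informal discussion also leaves implicit). Because the multiplier criterion sees only the slope $s=\lim\sigma_2/\sigma_1$ via \eqref{ratio}, a divergent sequence heading to infinity in direction $(1,s)$ and one heading in the antipodal direction $(-1,-s)$ have the \emph{same} limiting bounded multipliers $\{\mu,\mu^{-1}\}$---only the sign of the exploding real multiplier differs. Read literally, the criterion therefore identifies antipodal rays and yields the $\Bbb{RP}^2$ compactification $\widehat{\mathcal M_2(\Bbb R)}$, not a closed disk. To genuinely obtain a disk you would need to supplement the boundary data (for instance by also recording the sign of the diverging multiplier, which does distinguish the two rays), or else argue that for the applications in \S\ref{proof} the weaker $\Bbb{RP}^2$ structure suffices; your proposal does neither.
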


\begin{remark}
The two constituent parts of the circle at infinity \eqref{circle at infinity} correspond to different types of real dynamics. Given a sequence $\left\{g_n\right\}_{n=1}^\infty$ of real quadratic rational maps, when the limit multiplier is $\mu\in (-1,1)$, it means that the other limit multiplier $\mu^{-1}$ is real as well, and hence for $n$ large enough the map $g_n$ has three real fixed points. As $n\uparrow\infty$, one of the corresponding multipliers blows up and the other two tend to the real numbers $\mu$ and $\mu^{-1}$. In contrast, when the maps $g_n$ have a conjugate pair of fixed points and a real fixed point whose multiplier tends to infinity, the limit point on  ${\rm{S}}^1_\infty$ would be a point $e^{{\rm{i}}\theta}$ from the unit circle. The latter is the case for $(+-+)$-bimodal maps which we study in this paper.    
\end{remark}

We finish with a brief treatment of the \textit{hyperbolic components} of the moduli space of quadratic rational maps. Recall that a rational map is called hyperbolic if each of its critical orbits converges to an attracting cycle; equivalent characterizations could be found in \cite[Theorem 19.1]{MR2193309}.   The paper \cite{MR1047139} divides  the hyperbolic components of the 
\textit{critically marked moduli space} (\cite[\S6]{MR1246482}) 
\small
\begin{equation}
\begin{split}
&\mathcal{M}_2^{\rm{cm}}(\Bbb{C}):={\rm{Rat}}_2^{\rm{cm}}(\Bbb{C})\big/{\rm{PSL}}_2(\Bbb{C})\\
&=\left\{(f,c_0,c_1) \mid f \text{ a rational map of degree two with critical points } c_0, c_1\right\}\Bigg/
\begin{matrix}\label{critically marked}
(f,c_0,c_1)\sim\\
\left(\alpha\circ f\circ\alpha^{-1},\alpha(c_0),\alpha(c_1)\right)
\end{matrix}
\end{split}
\end{equation}
\normalsize
into four classes and investigates their topological types.   The corresponding topological types in the unmarked space $\mathcal{M}_2(\Bbb{C})$ can then be deduced \cite[\S7]{MR1246482}. Here, we summarize the four different classes of hyperbolic quadratic maps. 
\begin{itemize}
\item\textbf{Type B: Bitransitive.} Both critical orbits converge to the same attracting periodic orbit but critical points are in immediate basins of different points of this orbit. 
\item\textbf{Type C: Capture.} Only one critical point lies in the immediate basin of an attracting periodic point and the other critical orbit eventually lands there.
\item\textbf{Type D: Disjoint Attractors.} The critical orbits converge to distinct periodic orbits.
\item\textbf{Type E: Escape.} Both critical orbits converge to the same attracting fixed point. This is the only situation where the Julia set of a hyperbolic quadratic rational maps is disconnected (indeed, a Cantor set) \cite[Lemma 8.2]{MR1246482}.
\end{itemize}
There are infinitely many hyperbolic components of types \textbf{B}, \textbf{C} or \textbf{D}. As 
components in $\mathcal{M}_2(\Bbb{C})$, they are topological four cells. The same remains true in the marked space $\mathcal{M}_2^{\rm{cm}}(\Bbb{C})$ with the exception of the component of type \textbf{B} that contains $z\mapsto\frac{1}{z^2}$; the component that we disregard in the proof of Theorem \ref{main}; 
cf. Remark \ref{q=2 irrelevant}.  On the contrary, there is precisely one hyperbolic component of type \textbf{E} which is homeomorphic to $\Bbb{D}\times (\Bbb{C}-\overline{\Bbb{D}})$ \cite[Lemma 8.5]{MR1246482} ($\Bbb{D}$ the open unit disk). 
The escape component is furthermore different in the sense that it is the only hyperbolic component lacking a so-called center -- every other component contains a unique PCF map to which we refer as its center. 

The \textit{real hyperbolic components} obtained from non-empty intersections of complex hyperbolic components with $\mathcal{M}_2(\Bbb{R})$ pertain to our treatment of the real entropy for the following reasons: 
\begin{itemize}
\item the real entropy of a hyperbolic map is the logarithm of an algebraic number and hence $h_\Bbb{R}$ is constant over any real hyperbolic component due to its continuity; 
\item the dynamics on $\hat{\Bbb{R}}$ of the post-critically finite map at the center admits a Markov partition which allows us to  calculate the entropy over the component. 
\end{itemize}
\begin{convention}\label{convention 2}
The intersection of any complex hyperbolic component in $\mathcal{M}_2(\Bbb{C})$  with $\mathcal{M}_2(\Bbb{R})$, if non-vacuous, is connected and contains the center except for the escape component which does not have a center and its intersection with  $\mathcal{M}_2(\Bbb{R})$ has two connected components on which $h_\Bbb{R}$ is either identically zero or identically $\log(2)$ \cite[\S3]{2019arXiv190103458F}. We shall refer to these real escape components as the 
$h_\Bbb{R}\equiv 0$ \textit{escape component} and the $h_\Bbb{R}\equiv \log(2)$ \textit{escape component}. 
Given a complex hyperbolic component $\mathcal{H}\subset\mathcal{M}_2(\Bbb{C})$ different from the escape component, 
by abuse of notation, we show the (connected) real hyperbolic component $\mathcal{H}\cap\mathcal{M}_2(\Bbb{R})$ by 
$\mathcal{H}$ as well. The hyperbolic component in $\mathcal{M}^{\rm{cm}}_2(\Bbb{C})$ to which $\mathcal{H}$ lifts is shown by $\mathcal{H}^\times$.
\end{convention}

 In the next section, we construct a class of bitransitive real hyperbolic components with known entropy values via introducing their centers. 

\section{Constructing Unbounded Hyperbolic Components \texorpdfstring{$\mathcal{H}_{p/q}$}{Hpq}}\label{construction}
The current section is the main technical part of the paper. We first construct a family of hyperbolic PCF quadratic rational maps   in Proposition \ref{the family}  with real coefficients and a comprehensible dynamics on $\hat{\Bbb{R}}$. We then proceed with an analysis of the real hyperbolic components they determine in $\mathcal{M}_2(\Bbb{R})$ and the closure of these components in the compactification $\overline{\mathcal{M}_2(\Bbb{R})}$:  In Propositions \ref{parametrization} and \ref{degeneration} we present a family of curves in the aforementioned components and study their limit points as the maps degenerate.

\begin{convention}\label{setting}
In this section $q$ is an integer larger than one, $p$ belonging to the set $\{1,\dots,q-1\}$  is an integer coprime  to $q$, and $p'$ denotes the multiplicative inverse of $p$ modulo $q$; i.e. the unique element of that set satisfying $pp'\equiv 1\pmod{q}$. 
The  indices  are always considered modulo $q$ and hence are treated as elements of  $\Bbb{Z}/q\Bbb{Z}$.  The critical points are denoted by $c_0$ and $c_1$. 
\end{convention}

The following proposition is the key construction of this paper.

\begin{proposition}\label{the family}
Let $q\geq 2$ be an integer and $p/q\in\Bbb{Q}/\Bbb{Z}$ a fraction in lowest terms. There exists a unique critically finite quadratic rational map $f:=f^\times_{p/q}$ with the following properties:
\begin{enumerate}[\bfseries a.]
\item \label{a} (marked critical points) the two critical points $c_0, c_1$ are labeled;
\item \label{b} (the real condition) $f$ has real coefficients;
\item \label{c} (normalization) $c_0=0$, $f({\rm{i}})={\rm{i}}, f(-{\rm{i}})=-{\rm{i}}$;
\item \label{d} (bitransitive) the critical points $c_0, c_1$ lie in a cycle of period $q$, so the post-critical set ${\rm{P}}_f$ of $f$ is given by $\left\{f^{\circ j}(c_0)\right\}_{j=0}^{q-1}$;
\item \label{e} (rotation number) ${\rm{P}}_f $ is a subset of circle $\hat{\Bbb{R}}=\Bbb{R}\cup\{\infty\}$ equipped with its usual orientation; alternatively,  we may write its elements  as $\{x_0, x_1, \ldots, x_{q-1}\}$ where 
\begin{equation}\label{auxiliary 2}
 0 =c_0=x_0< x_1 < x_2 < \ldots < x_{q-1}<0;
\end{equation}
then we have 
$$  f(x_j)=x_{j+p}$$ 
for all $j \in \Bbb{Z}/q\Bbb{Z}$, so that the restriction $f\restriction_{{\rm{P}}_f}: {\rm{P}}_f \rightarrow {\rm{P}}_f$ forms a cycle with rotation number $p/q$;
\item \label{f} (adjacent critical points) $c_1=x_1$, which in conjunction with the previous property implies 
$f^{\circ p'}(c_0)=c_1$;
\item \label{g} (Markov partition for real dynamics) for $j\in \Bbb{Z}/q\Bbb{Z}$ define intervals in $\hat{\Bbb{R}}$ by 
$$ I_j:=[x_j, x_{j+1}].$$
Then 
\begin{equation}\label{Markov partition}
\begin{split}
&f(I_0)= I_0 \cup I_1 \cup \ldots \cup \underbrace{I_p}_{\text{omitted}} \cup \ldots \cup I_{q-1}\\
&f(I_j)=I_{j+p},\, j=1, \dots, q-1.
\end{split}
\end{equation}
Furthermore, $f\restriction_{I_0}$ reverses orientation while $f\restriction_{I_j},\, j \neq 0$ preserves orientation;

\item \label{h} (entropy) the topological entropy $h_q$ of $f\restriction_{\hat{\Bbb{R}}}$ is the logarithm of the largest positive real root $r_q$ of 
\begin{equation}\label{h_q}
 t^{q-1}-t^{q-2}-\ldots - t^2-t-1=\frac{t^{q}-2t^{q-1}+1}{t-1};
\end{equation}
\item \label{i} (basins and their boundaries) for $j \in \Bbb{Z}/q\Bbb{Z}$ let $\Omega_j$ be the immediate super-attracting basin containing $x_j$; then each $\Omega_j$ is a Jordan domain, and $\partial\Omega_j \supset\{\pm{\rm{i}}\}$;
\item \label{j} (distinguished repelling $q$-cycle) $\partial\Omega_0 \cap \hat{\Bbb{R}}$ consists of two points, with a unique positive real element $\zeta_0>0$, which is periodic of exact period $q$; we denote the remaining elements in this cycle by $\zeta_j \in \partial\Omega_j, \, j=1,\ldots, q-1$, so that $f(\zeta_j)=\zeta_{j+p}$. They are deployed on the circle $\hat{\Bbb{R}}$ as follows  (cf. Figure \ref{fig:p3q10ex}):
\begin{enumerate}[I.]
\item $x_0=0 < \zeta_0<\zeta_1<x_1$,
\item for $j=1,\ldots, p'$: $\zeta_{jp} < x_{jp}$,
\item for $j=1,\ldots, q-p'$: $x_{1+jp}<\zeta_{1+jp}$;
\end{enumerate}

\item \label{k} (normalized basin coordinates) there exist unique Riemann maps (extended to the boundaries)
$\phi_j:(\overline{\Bbb{D}}, 0, 1)\rightarrow(\overline{\Omega_j}, x_j, \zeta_j), j \in \Bbb{Z}/q\Bbb{Z}$.  The first-return map of the immediate basin $\Omega_0$ to itself,  in these coordinates,  is 
$$\phi_j^{-1}\circ f^{\circ q}\circ\phi_j: (\Bbb{D}, 0, 1)\rightarrow(\Bbb{D}, 0, 1);$$
 and is given by $\omega\mapsto\omega^4$.  

%\item (symmetry in parameter) Let $M$ be the unique M\"obius transformation fixing $0$ and interchanging $\pm i$. Then $M^2=\id$ and as unmarked rational maps, $M\circ f_{p/q}\circ M^{-1}=f_{-p/q}$ in $\Rat_2$. 
\end{enumerate}

\end{proposition}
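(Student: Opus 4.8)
The plan is to invoke Thurston's characterization of rational maps \cite{MR1251582} to produce $f_{p/q}$ from a topological model, and then extract the listed analytic and combinatorial properties from the rigidity part of Thurston's theorem together with the symmetry of the construction. First I would build the branched cover: place points $0=x_0<x_1<\cdots<x_{q-1}$ on $\hat{\Bbb R}$ in the prescribed cyclic order with rotation number $p/q$, declare $c_0=x_0$ and $c_1=x_1$ to be the two critical values-to-be arranged so that $x_0$ and $x_1$ sit in a single cycle of length $q$ (this forces $f^{\circ p'}(c_0)=c_1$, since $p'$ is the inverse of $p$ mod $q$), and build an orientation-preserving degree-two branched cover $F:S^2\to S^2$ with critical points $c_0,c_1$, ramification values $c_0,c_1$, and post-critical set $\{x_0,\dots,x_{q-1}\}$, realizing the prescribed action $x_j\mapsto x_{j+p}$ on the post-critical set and with the prescribed orientation behaviour on the arcs $I_j$ (namely $F$ folds $I_0$ over and maps the remaining $I_j$ homeomorphically, matching \eqref{Markov partition}). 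Making this a genuinely well-defined branched cover is a small piece of surgery: one specifies $F$ on $\hat{\Bbb R}$ and then extends to the two complementary disks compatibly; the fold at $c_0=0$ and a fold at $c_1=x_1$ take care of the two critical points.

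Next I would check that $F$ is \emph{Thurston-equivalent to a rational map}, i.e. that the induced pullback map on the Teichm\"uller space of $(S^2,{\rm P}_F)$ has no obstruction. Since ${\rm P}_F$ lies on a circle and the dynamics on ${\rm P}_F$ is a single cycle (a "rotation"), every critical point is strictly periodic, so $F$ is a \emph{hyperbolic} (expanding on the orbifold) Thurston map; by the standard criterion (e.g. \cite{MR1609463}, of which this is claimed to be a special case) such maps whose orbifold is hyperbolic and which carry no Levy/Thurston obstruction are realized, and for post-critically-finite maps with post-critical set on a circle the absence of an invariant multicurve obstruction can be verified directly — any candidate obstructing multicurve would have to be carried by the complement of the circle and one checks its pullback has spectral radius $<1$. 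This gives a rational map $f$, unique up to M\"obius conjugacy, realizing the combinatorial model; property \eqref{d}, \eqref{e}, \eqref{f}, and the topological content of \eqref{g} are then immediate from the model.

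The remaining properties I would obtain as follows. For \eqref{b} (real coefficients) and \eqref{c} (normalization): the topological model $F$ can be chosen to commute with complex conjugation $z\mapsto\bar z$ — the post-critical set is on $\hat{\Bbb R}$, the two complementary disks are swapped by conjugation, and the extension can be made equivariant — so by uniqueness in Thurston's theorem the rational map $f$ is conjugate to its own complex conjugate, hence (after a real M\"obius normalization putting $c_0$ at $0$ and the two conjugate fixed points at $\pm{\rm i}$, which is possible because $f$ has a conjugate pair of fixed points — they are the images of the two disk-centers, not on $\hat{\Bbb R}$) has real coefficients; the normalization $f(\pm{\rm i})=\pm{\rm i}$ pins down the remaining one-parameter M\"obius ambiguity and yields the \emph{mixed normal form} \eqref{mixed normal form} or \eqref{mixed normal form-alternative}, and with it uniqueness. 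For \eqref{h} (entropy): the Markov partition \eqref{Markov partition} gives a transition matrix whose characteristic polynomial is (up to the cyclotomic-type factor $t-1$) the companion polynomial in \eqref{h_q}; $h_q=\log r_q$ follows from the standard fact that the topological entropy of a Markov interval map is the log of the Perron root of its transition matrix, plus a check that the kneading data force irreducibility so that this root is genuinely achieved. For \eqref{i}, \eqref{k} (Jordan basins and the $\omega\mapsto\omega^4$ first-return map): each $\Omega_j$ is a Fatou component of a hyperbolic PCF map, hence a Jordan domain with locally connected boundary; the first return $f^{\circ q}:\Omega_0\to\Omega_0$ is a proper map fixing the super-attracting center $x_0$ with local degree equal to the product of local degrees of $f$ along the cycle $\Omega_0\to\Omega_p\to\cdots\to\Omega_0$, and since both critical points $c_0,c_1$ lie on this cycle each contributes a factor $2$, giving total degree $4$; a proper degree-$4$ self-map of the disk fixing $0$ with a single critical point there is conjugate by a Riemann map to $\omega\mapsto\omega^4$. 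That both $\pm{\rm i}$ lie on every $\partial\Omega_j$ follows because $\pm{\rm i}$ are repelling fixed points on the common boundary of the immediate basins (the "dividing" fixed points of the real map), propagated around the cycle. Finally \eqref{j} (the distinguished repelling $q$-cycle and its precise interleaving with the $x_j$ on $\hat{\Bbb R}$): $\partial\Omega_0\cap\hat{\Bbb R}$ consists of the two preimages-on-the-circle of the Riemann-map boundary point, one of which is the point $\zeta_0>0$ fixed by $f^{\circ q}$ (it is the landing point of the external ray at angle $0$ in the $\phi_0$-coordinate, which is $f^{\circ q}$-fixed since $0\mapsto 0$ under $\omega\mapsto\omega^4$); the cyclic labels $\zeta_j\in\partial\Omega_j$ with $f(\zeta_j)=\zeta_{j+p}$ and the inequalities I–III are then read off by tracking, arc by arc, how $f$ permutes the $I_j$ and flips $I_0$, using the orientation data in \eqref{g} — this is a finite bookkeeping argument of the kind illustrated in Figure \ref{fig:p3q10ex}.

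\medskip

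\noindent\textbf{Main obstacle.} The genuinely substantive step is the unobstructedness in Thurston's theorem — ruling out a Thurston obstruction for the topological model $F$ — and, entangled with it, the \emph{uniqueness} claim: showing the combinatorial data a.--f. determine $f$ up to M\"obius conjugacy and hence (with the real normalization) exactly. I expect the cleanest route is to cite the relevant case of \cite{MR1609463} (maps with post-critical set arranged cyclically on a circle, "rotation"-type combinatorics) where this realization-and-rigidity statement is presumably proved, reducing our burden to verifying that our $F$ meets its hypotheses; the rest (entropy via the transition matrix, the disk dynamics, the interleaving inequalities) is then extraction of standard consequences and careful but routine planar bookkeeping.
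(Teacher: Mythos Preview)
Your strategy is the same as the paper's: build a topological branched cover with the prescribed circle combinatorics, realize it via Thurston's theorem by invoking the blow-up construction in \cite{MR1609463}, deduce real-ness from the conjugation symmetry plus Thurston rigidity, and then read off \ref{g}--\ref{k} from the resulting Markov structure and Fatou-component analysis. The paper's construction is more concrete---it literally blows up a $p/q$-rotation of the unit circle along an arc, which places the model squarely inside the hypotheses of \cite{MR1609463}---but this is a presentational difference, not a mathematical one.

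Two places where your justifications fall short of what is actually needed:

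\emph{Jordan domains.} The implication ``hyperbolic PCF, hence each Fatou component is a Jordan domain'' is not a general theorem; local connectivity of the Julia set gives you only that the Riemann map extends continuously to the boundary, not that the boundary is a simple closed curve. The paper cites \cite{MR1424402}, a result specific to critically finite rational maps with exactly two critical points and not conjugate to a polynomial; you should invoke this (or reprove it) rather than assert the general claim.

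\emph{The fixed points $\pm{\rm i}$ lie on every $\partial\Omega_j$.} Your sentence here is circular---you assert they are ``on the common boundary'' and then ``propagate around the cycle,'' but the first clause is exactly the claim to be proved. The paper's argument is genuinely non-trivial: it takes the invariant star graph $\Gamma$ (the spokes joining the rotation center to the post-critical points in the model), pulls it back iteratively under $g$, arranges the arms to be radial in the B\"ottcher coordinates of the basins, and uses expansion on the Julia set to show the pullbacks converge to internal rays landing at $\pm{\rm i}$. Some such landing argument is needed; the repelling nature of $\pm{\rm i}$ alone does not place them on any particular basin boundary.
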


\begin{figure}
\begin{center}
\includegraphics[width=18cm]{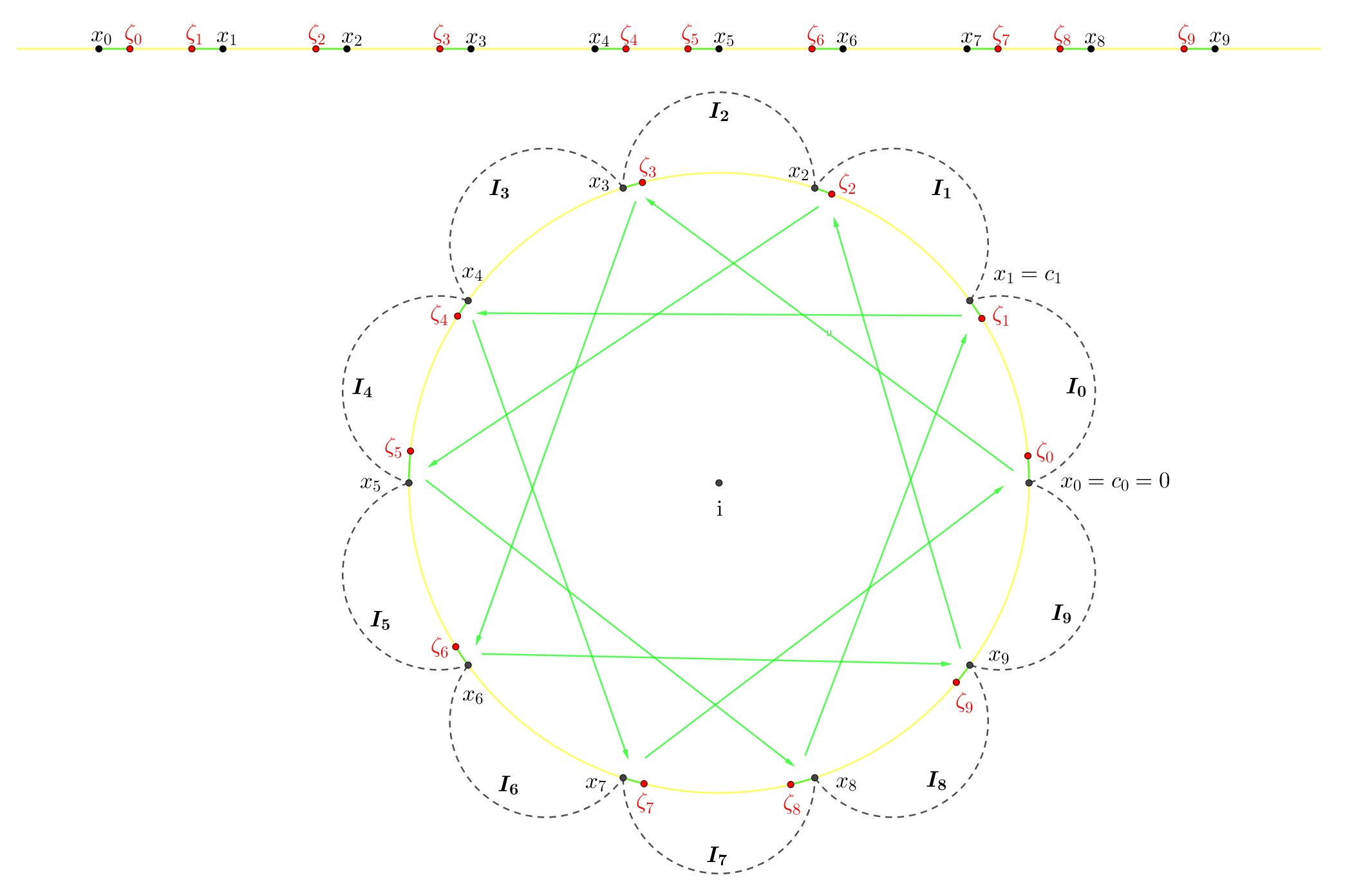}
\end{center}
\caption{An illustration of the real dynamics  described in Proposition \ref{the family} in the case of $q=10, p=3$. Deployment of the post-critical set $\left\{x_j\right\}_{j=0}^{q-1}$ (in black) and the distinguished repelling $q$-cycle $\left\{\zeta_j\right\}_{j=0}^{q-1}$ (in red) is drawn on the circle (at bottom) and on the real line (at top); compare with statement \ref{j} of the proposition. The map $f$ permutes them as  $x_j\mapsto x_{j+p}$ and $\zeta_j\mapsto\zeta_{j+p}$. 
There is a cycle of  open intervals between $\zeta_j$'s and $x_j$'s (in green) which lies in the immediate super-attracting basin of $\left\{x_j\right\}_{j=0}^{q-1}$. The repelling fixed point ${\rm{i}}$ is at the center of the disk, while the repelling fixed point at $-{\rm{i}}$ is the point at infinity in this representation. The Markov partition formed by intervals $I_j=[x_j,x_{j+1}]$ is also visible in the picture.} 
\label{fig:p3q10ex}
\end{figure}

\begin{figure}[t]
\begin{center}
\includegraphics[width=11cm]{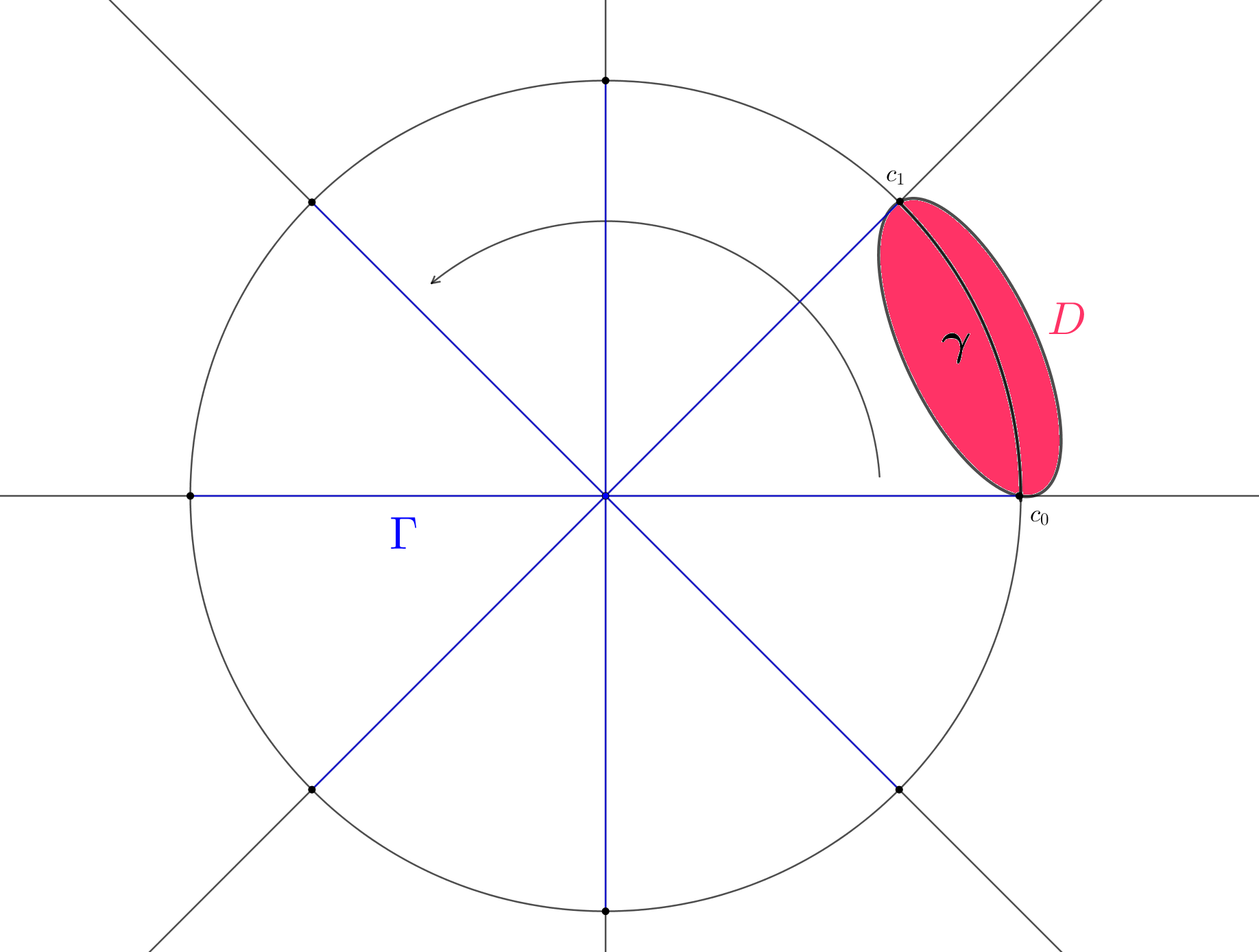}
\end{center}
\caption{The dynamical $w$-plane in the case of $q=8,p=3$. In the proof of Proposition \ref{the family}, one first constructs a rational map $g=g(w)$ which is conjugate to the desired $f_{p/q}(z)$ via \eqref{Cayley}. The construction is by the means of ``blowing up'' a $p/q$-rotation along a curve $\gamma$. In this process, $\gamma$ is slit open (here to the black ellipse) and a topological disk $D$ (here the interior of the ellipse and in red) is then inserted. The endpoints $c_0$ and $c_1$ of $\gamma$ turn out to be the critical points of the resulting rational map $g$ and the center of the rotation a repelling fixed point.  The post-critical set ${\rm{P}}_g$ is the set of black points. The star-shaped curve $\Gamma$ comes up in establishing property \ref{i}.} 
\label{fig:cells}
\end{figure}

\begin{proof}
The proof has three different facets:
\begin{itemize}
\item constructing $f_{p/q}$ as a two-sheeted topological branched cover $S^2\rightarrow S^2$,  proving that it is realized as a rational map via invoking Thurston's characterization of rational maps \cite{MR1251582}, and establishing the uniqueness -- statements \ref{a} through \ref{f};
\item investigating the real dynamics and the corresponding Markov partition -- statements \ref{g} and \ref{h};
\item analyzing the Fatou components -- statements \ref{i} through \ref{k}.
\end{itemize}

\emph{Existence.}  We construct a critically finite topological branched self-cover of the sphere, denoted by $G$, with similar properties, but with the post-critical set in the unit circle, for convenience. We will then apply W. Thurston's combinatorial characterization of rational functions to obtain a rational function $g$. Finally, we conjugate to obtain a real map $f=f_{p/q}$. The specific construction below is a special case of that given in \cite[\S 5.2]{MR1609463}.  \\
\indent
We begin with the change of coordinates 
$$M:\left(\hat{\Bbb{C}}_z, 0, {\rm{i}}, -{\rm{i}}\right)\rightarrow\left(\hat{\Bbb{C}}_w, 1, 0, \infty\right) $$
given by 
\begin{equation}\label{Cayley}
w=M(z)=\frac{{\rm{i}}-z}{{\rm{i}}+z}. 
\end{equation}
We will construct $G:\hat{\Bbb{C}}_w\rightarrow\hat{\Bbb{C}}_w$ first, apply W. Thurston's criterion to obtain a rational map 
$g:\hat{\Bbb{C}}_w\rightarrow\hat{\Bbb{C}}_w$ equivalent to $G$,  and then set $f:=M^{-1}\circ g \circ M$ to obtain our desired map. \\
\indent
Fix $p/q \in \Bbb{Q}/\Bbb{Z}$.  Let ${\rm{P}}:=\left\{{\rm{e}}^{\frac{2\pi {\rm{i}} j}{q}} : j \in \Bbb{Z}/q\Bbb{Z}\right\}$. Equip $\hat{\Bbb{C}}_w$ with the following cell structure: the set of $0$-cells is ${\rm{P}} \cup \{0, \infty\}$; the $1$-cells have two types: sub-arcs of $\{|w|=1\}$ joining ${\rm{e}}^{\frac{2\pi {\rm{i}} j}{q}}$ to ${\rm{e}}^{\frac{2\pi {\rm{i}}(j+1)}{q}}$, and sub-arcs of $\{\arg(w)=2\pi j/q\}$ joining the root of unity it contains to $0$ and $\infty$; see Figure \ref{fig:cells}.  The $2$-cells are then defined as the complementary faces. The order $q$ rotation 
$$ (\hat{\Bbb{C}}_w, {\rm{P}}, 0, \infty)\rightarrow (\hat{\Bbb{C}}_w, {\rm{P}}, 0, \infty)$$
given by 
$$w \mapsto{\rm{e}}^{\frac{2\pi {\rm{i}}p}{q}}w$$
is then a cellular homeomorphism.   We define 
$$ G: (\hat{\Bbb{C}}_w, {\rm{P}}, 0, \infty)\rightarrow (\hat{\Bbb{C}}_w, {\rm{P}}, 0, \infty)$$
by ``blowing up'' (in the sense of \cite[\S 5.2]{MR1609463}) this $p/q$-rotation along the  circular arc $\gamma$ joining $1$ and ${\rm{e}}^{\frac{2\pi {\rm{i}}}{q}}$, which is periodic under the rotation. Here is what this surgery entails. Slit $\hat{\Bbb{C}}_w$ along $\gamma$, and isotop the remainder by pulling the slits apart to form two arcs bounding a topological disk $D$. Map $D$ homeomorphically to the complement of the image of $\gamma$ under 
$w\mapsto{\rm{e}}^{\frac{2\pi {\rm{i}}p}{q}}w$, 
and map $\hat{\Bbb{C}}_w - D$ by pushing the slits back together and then applying the rotation 
$w \mapsto {\rm{e}}^{\frac{2\pi {\rm{i}}p}{q}}w$.  The result is a quadratic map $G$ with post-critical set ${\rm{P}}_G={\rm{P}}$ and two critical points at the endpoints of $\gamma$. We label the critical points of $G$ by distinguishing the one located at the point $w=1$ and calling it $c_0$, and the other $c_1$; see Figure \ref{fig:cells}.\\
\indent
An obstruction to $G$ being equivalent to a rational map, if it exists, takes the form of a non-empty finite collection of pairwise disjoint, pairwise homotopically distinct, essential, non-peripheral curves in the complement of $P$ with certain invariance properties, up to homotopy in $\hat{\Bbb{C}}$. Clearly, any such collection must intersect the circle in the complement of the post-critical set of $G$. However, this is explicitly ruled out by the main theorem in \cite[\S 5.2]{MR1609463}.  We conclude that $G$ is \textit{Thurston equivalent} (conjugate-up-to-isotopy relative to ${\rm{P}}_G$) to a rational function which we denote by $g$. In other words, there is a commutative diagram of the form  
\begin{equation}\label{diagram}
\xymatrix{\hat{\Bbb{C}}_w \ar[r]^\phi \ar[d]_G & \hat{\Bbb{C}}_w \ar[r]^{M^{-1}} \ar[d]_g &\hat{\Bbb{C}}_z \ar[d]_f \\
\hat{\Bbb{C}}_w  \ar[r]_{\phi'} & \hat{\Bbb{C}}_w  \ar[r]_{M^{-1}} &\hat{\Bbb{C}}_z
}
\end{equation}
in which $\phi$ and $\phi'$ are two homeomorphisms which coincide on ${\rm{P}}_G$, and are isotopic relative to ${\rm{P}}_G$. Therefore, the post-critical relations of $G$ carry over to M\"obius conjugate rational maps $f$ and $g$.  Conjugating with a suitable M\"obius transformation, we may assume  that just like $G$ the critical points of $g$ are on the unit circle, $w=1$ is one of the critical points, and the origin is a fixed point.  We can mark the critical points of  
$$f:=M^{-1} \circ g \circ M: \hat{\Bbb{C}}_z\rightarrow\hat{\Bbb{C}}_z$$ 
so that the one corresponding to $w=1$ (i.e. $z=0$) is labeled $c_0$ and the other $c_1$.
We obtain a PCF map $f^\times_{p/q}:=f$ with real critical points and a fixed point at $z={\rm{i}}$. Properties \ref{a} through \ref{f} follow immediately from this construction except property \ref{b} -- the real-ness -- which requires a more careful treatment relying on the \textit{rigidity} part of Thurston's characterization. One needs to verify that $g$ preserves the unit circle or equivalently, $f$ preserves the real circle $\hat{\Bbb{R}}$. Denoting the inversion $w\mapsto\frac{1}{\bar{w}}$ with respect to the unit circle by $u$, $u$ commutes with $G$ by construction, and under the change of coordinates \eqref{Cayley} corresponds to the complex conjugation $z\mapsto\bar{z}$ in the $z$-plane. Therefore, conjugation with $u$ turns diagram \eqref{diagram} into    
\begin{equation*}
\xymatrixcolsep{4pc}\xymatrix{\hat{\Bbb{C}}_w \ar[r]^{u\circ \phi\circ u^{-1}} \ar[d]_{G=u\circ G\circ u^{-1}} & \hat{\Bbb{C}}_w \ar[r]^{M^{-1}} \ar[d]_{u\circ g\circ u^{-1}} &\hat{\Bbb{C}}_z \ar[d]_{\bar{f}} \\
\hat{\Bbb{C}}_w  \ar[r]_{u\circ \phi'\circ u^{-1}} & \hat{\Bbb{C}}_w  \ar[r]_{M^{-1}} &\hat{\Bbb{C}}_z
}
\end{equation*}
where the rational map $u\circ g\circ u^{-1}$ is again Thurston equivalent to $G$, and $\bar{f}:z\mapsto \overline{f(\bar{z})}$ is simply the rational map $f$ with its coefficients being conjugated. By \textit{Thurston's rigidity}, the rational maps $g$ and $u\circ g\circ u^{-1}$ must be M\"obius conjugate. Equivalently, the quadratic rational map $f$ is M\"obius conjugate to $\bar{f}$ which means that the  $(\sigma_1,\sigma_2)$-coordinates of the conjugacy class $\langle f\rangle$ are real. Repeating the argument outlined in \S\ref{background}, it is not hard to see that $f$ should be with real coefficients: We can safely assume that the multiplier of fixed point $z={\rm{i}}$ of $f$, denoted by $\mu$, is real (there exists a fixed point of real multiplier since $\sigma_1,\sigma_2\in\Bbb{R}$).
Then, after an appropriate real M\"obius change of coordinates, $f$ turns into a map of the form \eqref{mixed normal form} with a fixed point of multiplier $\mu$ at $\infty$ and real critical points. 
Formulas \eqref{coordinates} now imply that $f$ is with real coefficients unless it is conjugate to a real map of the form \eqref{mixed normal form-alternative}. But the critical points of the latter map are complex conjugate and this poses extra post-critical relations. Now that we have established $f$ is with real coefficients, the normalization \ref{c} becomes complete: the conjugate of $z={\rm{i}}$ must be a fixed point as well. \\
\indent 
\emph{Uniqueness.} Suppose we have a map $f$ satisfying properties \ref{a} through \ref{f}.  Any such map is conjugate up to isotopy relative to its post-critical set to the combinatorial model $G$.  By Thurston's rigidity theorem  \cite{MR1251582}, any two such maps $f^{(1)}, f^{(2)}$ must be Möbius conjugate.  But the only Möbius conjugation that does not violate properties \ref{a}, \ref{c} and \ref{e} of $f$ is the trivial one: such a transformation must fix the critical point $c_0=0$, and should preserve the
set $\{\pm{\rm{i}}\}$ of fixed points off the real axis as well as the order of real numbers (because of \eqref{auxiliary 2}).\\
\indent
We next turn into parts \ref{g} and \ref{h}. Considering the intervals $I_j=[x_j,x_{j+1}]\, (j\in\{0,\dots,q-1\})$ covering $\hat{\Bbb{R}}$ as in \ref{g}, the critical points $c_0=x_0$ and $c_1=x_1$ occur as the boundary points of the first one $I_0$ (cf. \eqref{auxiliary 2} and property \ref{f}) and hence $f(I_0)$ coincides with the range $f(\hat{\Bbb{R}})$ of $f\restriction_{\hat{\Bbb{R}}}$ (which is not surjective following the discussion in the beginning of \S\ref{background}), and must be one of the closed arcs connecting $f(x_0)=x_p$ to $f(x_1)=x_{p+1}$ on the real circle $\hat{\Bbb{R}}$. But the range must have all points $x_j$ of the critical orbit; so it coincides with the bigger arc
$$
\hat{\Bbb{R}}-{\rm{int}}(I_p)=I_0 \cup I_1 \cup \ldots \cup \underbrace{I_p}_{\text{omitted}} \cup \ldots \cup I_{q-1}
$$
rather than $I_p=[x_p,x_{p+1}]$; that is, $f\restriction_{I_0}$ is orientation-reversing, meaning that $f$ attains a maximum at $x_0=c_0$ and a minimum at $x_1=c_1$. These are the only critical points and thus $f$ is increasing (hence orientation-preserving) on 
$$
[x_1,x_q=x_0]=I_1\cup\dots\cup I_p\cup\dots\cup I_{q-1};
$$  
therefore, 
$f(I_j=[x_j,x_{j+1}])=[f(x_j)=x_{j+p},f(x_{j+1})=x_{j+1+p}]=I_{j+p}.$
Having established \ref{g}, using \eqref{Markov partition} to write the transition matrix for the Markov partition
 $\left\{I_0,\dots,I_p,\dots,I_{q-1}\right\}$ of $f\restriction_{\hat{\Bbb{R}}}:\hat{\Bbb{R}}\rightarrow\hat{\Bbb{R}}$ brings us to the $q\times q$ matrix below:   
\begin{equation}\label{the matrix}
\begingroup
\setlength\arraycolsep{8pt}
\begin{bmatrix}
1&0&0&\cdots&0\\
\vdots&\vdots&\vdots&\vdots&\vdots\\
1&0&0&\cdots&1\\
\mathbf{0}&\mathbf{0}&\mathbf{0}&\mathbf{\cdots}&\mathbf{0}\\
1&1&0&\cdots&0\\
1&0&1&\cdots&0\\
\vdots&\vdots&\vdots&\vdots&\vdots\\
1&0&0&\cdots&0
\end{bmatrix}_{q\times q}.
\endgroup
\end{equation}
Numbering the rows and columns with $0\leq i<q$ and $0\leq j<q$, here the column number $j$ corresponds to $I_j$. The row number $p$ (appeared in bold in \eqref{the matrix}) would be zero since, according to \eqref{Markov partition},  interval $I_p$  is not part of the range. The entropy of the critically finite circle map $f\restriction_{\hat{\Bbb{R}}}$ is the logarithm of the leading eigenvalue of this transition matrix \cite[Theorem 2]{MR1351519}.
Therefore, in order to establish \ref{h}, one needs to compute the characteristic polynomial of \eqref{the matrix}. We claim that it is given by 
$$
t\left(t^{q-1}-t^{q-2}-\dots-t^2-t-1\right)=\frac{t(t^q-2t^{q-1}+1)}{t-1}.
$$
The proof is elementary and will be presented in Lemma \ref{linear algebra} below.\\
\indent
We finally come to the proof of the last three parts which concern the Fatou components of $f$.   The main result of \cite{MR1424402} implies that a critically finite rational map with two critical points which is not conjugate to a polynomial has the property that each Fatou component is a Jordan domain.  Property \ref{k} is now obvious; the Riemann maps extend homeomorphically to the boundary since the basins are Jordan domains. \\
\indent 
Property \ref{j} follows by direct inspection: The intersection of the immediate super-attracting basin $\Omega_j$ of the member $x_j$ of the critical cycle with $\hat{\Bbb{R}}$ contains an open interval around $x_j$ -- the real immediate basin of $x_j$.  In the  case of critical points $x_0=c_0$ and $x_1=c_1$, the real immediate basins are open intervals of the form $\left(\zeta'_0,\zeta_0\right)\ni x_0$  and $\left(\zeta_1,\zeta'_1\right)\ni x_1$ where  $x_0<\zeta_0<\zeta_1<x_1$. These intervals and their boundaries are invariant under the $q^{\rm{th}}$ iterate of $f\restriction_{\hat{\Bbb{R}}}$. The corresponding restrictions (the real first-return maps) are boundary-anchored and unimodal with $x_0$ and $x_1$ as their extrema (cf. property \ref{k}). In view of property \ref{g}, it is not hard to see that $x_0$ is a local minimum and $x_1$ is a local maximum of the $q^{\rm{th}}$ iterate.  Consequently, one must have $f^{\circ q}(\zeta_0)=\zeta_0$ and $f^{\circ q}(\zeta_1)=\zeta_1$, while $f^{\circ q}(\zeta'_0)=\zeta_0$ and $f^{\circ q}(\zeta'_1)=\zeta_1$. Starting to iterate, the periodic endpoints of the real immediate basins $\Omega_j\cap\hat{\Bbb{R}}$ of $x_j$'s form a $q$-cycle (of course a repelling one since these endpoints are Julia) including $\zeta_0$ and $\zeta_1$, whereas the rest of endpoints are wandering, landing at the cycle just described. The deployment of the repelling cycle 
$\left\{\zeta_j:=f^{\circ j}(\zeta_0)\right\}_{j=0}^{q-1}$
with respect to $\left\{x_j\right\}_{j=0}^{q-1}$ could be inferred from $x_0<\zeta_0<\zeta_1<x_1$ in conjunction with the description \ref{g} of the orientation of $f$ on intervals $I_j$: Applying $f$ to $x_0<\zeta_0$ and $\zeta_1<x_1$ yields $\zeta_p<x_p$ and $x_{1+p}<\zeta_{1+p}$ respectively. 
Continuing to apply $f$ repeatedly, we obtain inequalities such as $\zeta_{jp}<x_{jp}$ 
and $x_{1+jp}<\zeta_{1+jp}$ as long as we are not applying $f$ to points from the interval $[x_0,x_1]$ where the function is decreasing. Thus, we have $\zeta_{jp}<x_{jp}$ for $j\in\{1,\dots,p'\}$ and $x_{1+jp}<\zeta_{1+jp}$ for 
$j\in\{1,\dots,q-p'\}$ (notice that $pp'\equiv 1,\, 1+p(q-p')\equiv 0 \pmod{q}$).  \\
\indent
The only remaining property is \ref{i} which will be required for the proof of Proposition \ref{degeneration}. That these basin boundaries contain $\pm{\rm{i}}$ follows from now standard arguments, detailed in \cite[\S 5.4]{MR2691488}.  Here is the idea. Going back to the $w$-plane via \eqref{Cayley}, let $\Gamma_G$ be the star emanating from $0$ in the $1$-skeleton of the cell structure on $\hat{\Bbb{C}}_w$, so that $G: \Gamma_G\rightarrow\Gamma_G$ is a $p/q$ rotation. The equivalence to $g$ yields a star graph $\Gamma$ which is invariant under $g$ up to isotopy relative to ${\rm{P}}_g$.  We may assume $0 \in\Gamma$. In Figure \ref{fig:cells}, $\Gamma$ is just the star of the origin, topologically speaking. There is no critical value on $\Gamma-{\rm{P}}_g$, hence the component of $g^{-1}(\Gamma-{\rm{P}}_g)$
that has the fixed point $w=0=M({\rm{i}})$ is homeomorphic to  $\Gamma-{\rm{P}}_g$.  The closure $\Gamma_1$ of this component is another star-graph isotopic to $\Gamma_0:=\Gamma$ relative to ${\rm{P}}_g \cup \{0,\infty\}$. We may assume that the ``arms'' of $\Gamma_0$ are ``radial'' near the attractors. More precisely, for some $0<\delta_0<1$, viewed in the $\omega$-coordinates of statement \ref{k}, the intersection of each arm of $\Gamma_0$ with the corresponding attracting basin coincides with the locus $\left\{\omega=r{\rm{e}}^{\frac{2\pi{\rm{i}}}{3}}\mid 0\leq r<\delta_0 \right\}$; note that the ``internal angles'' $2\pi/3$ are fixed under the first-return map $\omega\mapsto\omega^4$.  Via backward iteration and the lifting of isotopies, we obtain a sequence $\left\{\Gamma_n\right\}_n$ of mutually isotopic star-graphs centered at the origin and with the post-critical set ${\rm{P}}_g$ as their endpoints. The map $g$ restricts to homeomorphisms $\Gamma_{n+1} \to \Gamma_n$.  The intersections of the $\Gamma_n$ with the attracting basins, in the $\omega$-coordinates, take the form 
$\left\{\omega=r{\rm{e}}^{\frac{2\pi{\rm{i}}}{3}}\mid 0\leq r<\delta_n \right\}.$
  Since $g$ is expanding away from neighborhoods of ${\rm{P}}_g$, as $n\uparrow\infty$, we have $\delta_n \uparrow 1$ (since it must be fixed by the first-return map in the limit); and the $\Gamma_n$'s converge to the union of  internal rays of angle $2\pi/3$ meeting at the origin.   
\end{proof}

Below, is the lemma required for the proof of statement \ref{h} of the proposition:
\begin{lemma}\label{linear algebra}
The characteristic polynomial of matrix \eqref{the matrix} is given by 
$$t\left(t^{q-1}-t^{q-2}-\dots-t^2-t-1\right).$$ 
\end{lemma}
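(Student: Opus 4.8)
The plan is to exploit the fact that the matrix \eqref{the matrix} is merely a rank-one perturbation of a cyclic permutation matrix. Write $A$ for the $q\times q$ matrix in \eqref{the matrix}, with rows and columns indexed by $\Bbb{Z}/q\Bbb{Z}$; let $\mathbf{e}_0,\dots,\mathbf{e}_{q-1}$ be the standard basis vectors and $\mathbf{1}:=\mathbf{e}_0+\dots+\mathbf{e}_{q-1}$. The Markov relations \eqref{Markov partition} give the columns of $A$ at once: for $j\neq 0$ we have $f(I_j)=I_{j+p}$, so column $j$ of $A$ is $\mathbf{e}_{j+p}$; and $f(I_0)$ is the union of all the $I_i$ except $I_p$, so column $0$ of $A$ is $\mathbf{1}-\mathbf{e}_p$. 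Consequently
\[
A \;=\; P \;+\; (\mathbf{1}-2\,\mathbf{e}_p)\,\mathbf{e}_0^{\mathsf T},
\]
where $P$ is the permutation matrix of the bijection $j\mapsto j+p$ of $\Bbb{Z}/q\Bbb{Z}$. Since $\gcd(p,q)=1$, that bijection is a single $q$-cycle, hence $\det(tI-P)=t^{q}-1$ and, for $t^{q}\neq 1$,
\[
(tI-P)^{-1} \;=\; \frac{1}{t^{q}-1}\,\sum_{k=0}^{q-1} t^{\,q-1-k}\,P^{k}.
\]

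Next I would apply the matrix determinant lemma $\det\!\big(X+\mathbf{u}\mathbf{v}^{\mathsf T}\big)=\det(X)\,\big(1+\mathbf{v}^{\mathsf T}X^{-1}\mathbf{u}\big)$ with $X=tI-P$, $\mathbf{u}=-(\mathbf{1}-2\mathbf{e}_p)$ and $\mathbf{v}=\mathbf{e}_0$, to get $\det(tI-A)=(t^{q}-1)\,(1-S)$ with $S:=\mathbf{e}_0^{\mathsf T}(tI-P)^{-1}(\mathbf{1}-2\mathbf{e}_p)$ — valid first for $t^{q}\neq 1$ and then for all $t$ by polynomiality. Using $P^{k}\mathbf{1}=\mathbf{1}$ and $P^{k}\mathbf{e}_p=\mathbf{e}_{(k+1)p}$, the quantity $\mathbf{e}_0^{\mathsf T}P^{k}(\mathbf{1}-2\mathbf{e}_p)$ equals $1-2\varepsilon_k$, where $\varepsilon_k=1$ precisely when $q\mid (k+1)p$, i.e.\ (because $\gcd(p,q)=1$) precisely when $q\mid k+1$; in the range $0\le k\le q-1$ this occurs only at $k=q-1$. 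Therefore $(t^{q}-1)S=\sum_{k=0}^{q-1}t^{q-1-k}-2=\frac{t^{q}-1}{t-1}-2$, and substituting this in,
\[
\det(tI-A)\;=\;(t^{q}-1)-\Big(\frac{t^{q}-1}{t-1}-2\Big)\;=\;t^{q}+1-\frac{t^{q}-1}{t-1}\;=\;\frac{t\,\big(t^{q}-2t^{q-1}+1\big)}{t-1}\,,
\]
which by the identity already recorded in \eqref{h_q} equals $t\big(t^{q-1}-t^{q-2}-\dots-t^{2}-t-1\big)$, the claimed characteristic polynomial.

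The argument is a short computation rather than a subtle one, so I do not anticipate a genuine obstacle; the step most prone to error is the modular bookkeeping ($q\mid (k+1)p\iff q\mid k+1$, which is where $\gcd(p,q)=1$ re-enters) together with getting the sign right in the determinant lemma and in reading off the columns of \eqref{the matrix}. If one prefers to sidestep the matrix determinant lemma, there is an equally elementary route. From $A\mathbf{e}_j=\mathbf{e}_{j+p}$ for $j\neq 0$ and $A\mathbf{e}_0=\mathbf{1}-\mathbf{e}_p$ one sees that \emph{row} $p$ of $A$ is identically zero (the bold zero row in \eqref{the matrix}); expanding $\det(tI-A)$ along that row peels off a factor $t$ and leaves a $(q-1)\times(q-1)$ determinant. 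Re-indexing its rows and columns along the $q$-cycle $0\mapsto p\mapsto 2p\mapsto\cdots$ with the vertex $p$ deleted turns this determinant into a companion-type matrix — lower bidiagonal with $t$'s on the diagonal except for a $t-1$ in the last slot, and $-1$'s down the last column — whose value $\frac{t^{q}-2t^{q-1}+1}{t-1}$ follows from a one-step cofactor recursion; multiplying by the $t$ gives the same polynomial.
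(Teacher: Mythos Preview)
Your proof is correct, and it takes a genuinely different route from the paper's.

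The paper first conjugates by the linear change of basis $v_j\mapsto v_{p'j}$ (with $p'$ the inverse of $p$ modulo $q$) to reduce to the case $p=1$, and then computes the characteristic polynomial of the resulting matrix by two successive cofactor expansions along first rows and then one down a first column. You instead keep $p$ general, recognize $A$ as a rank-one perturbation $P+(\mathbf{1}-2\mathbf{e}_p)\mathbf{e}_0^{\mathsf T}$ of the cyclic permutation matrix, and invoke the matrix determinant lemma; the computation of $S$ is then a single geometric-series evaluation. The hypothesis $\gcd(p,q)=1$ enters in both arguments in parallel ways: in the paper it is what makes $v_j\mapsto v_{p'j}$ a bijection; in yours it is what makes $P$ a single $q$-cycle (so $\det(tI-P)=t^q-1$) and what gives the equivalence $q\mid(k+1)p\iff q\mid k+1$. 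Your approach is shorter and treats all $p$ uniformly; the paper's approach is more hands-on but requires no auxiliary identity. The alternative you sketch at the end --- expand along the zero row $p$ and then re-index along the cycle $0\mapsto p\mapsto 2p\mapsto\cdots$ --- is essentially the paper's reduction to $p=1$ followed by cofactor expansion, phrased slightly differently.
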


\begin{proof}
Following its description in the proof of Proposition \ref{the family}, matrix  \eqref{the matrix} is the matrix representation of an endomorphism of a vector space with the ordered basis $\left\{v_0,\dots,v_p,\dots,v_{q-1}\right\}$ which is defined as 
\begin{equation}\label{transformation 1}
v_0\mapsto\sum_{k=0}^{q-1}v_k-v_p;\quad   v_j\mapsto v_{j+p},\, j=1, \dots, q-1.
\end{equation}
We first claim that after a linear change of coordinates one may assume that $p=1$. The linear map $v_j\mapsto v_{p'j}$ establishes a conjugacy from \eqref{transformation 1} onto 
\begin{equation}\label{transformation 2}
v_0\mapsto\sum_{k=0}^{q-1}v_k-v_1;\quad   v_j\mapsto v_{j+1},\, j=1, \dots, q-1;
\end{equation}
where, keeping Convention \ref{setting}, $p'$ is a multiplicative inverse of $p$ modulo $q$. 
Therefore, it suffices to compute the characteristic polynomial of transformation \eqref{transformation 2} which has the matrix representation   
$$
A:=\begin{bmatrix}
1&0&0&\cdots& 0&1\\
0&0&0&\cdots&0&0\\
1&1&0&\cdots&0&0\\
1&0&1&\cdots&0&0\\
\vdots&\vdots&\vdots&\ddots&\vdots&\vdots\\
1&0&0&\cdots&1&0
\end{bmatrix}_{q\times q}
=
\begin{bmatrix}
1& \begin{matrix}  0&\cdots& 0 \end{matrix} &1\\
0& \begin{matrix}  0&\cdots& 0 \end{matrix} & 0\\
\begin{matrix} 1\\ \vdots\\ 1 \end{matrix} & \mathlarger{\mathlarger{I_{q-2}}} & \begin{matrix} 0\\ \vdots\\ 0 \end{matrix}
\end{bmatrix}_{q\times q}.
$$
In its first row, the matrix $tI_q-A$ has only two non-zero entries, the initial and the terminal ones. Removing the first row and the first column results in a lower-triangular matrix. Hence the cofactor expansion of determinant with respect to the first row of  $tI_q-A$ yields:
\begin{equation}\label{auxiliary 1}
\begin{split}
\det\left(tI_q-A\right)
&=(t-1)t^{q-1}+(-1)^{q+2}\,\det
\begin{bmatrix}
0&t&\cdots&0&0\\
-1&-1&t&\cdots&0\\
\vdots&\vdots&\ddots&\ddots&\vdots\\
-1&0&\cdots&-1&t\\
-1&0&\cdots&0&-1
\end{bmatrix}_{(q-1)\times (q-1)}\\
&=(t-1)t^{q-1}-t\,\det
\begin{bmatrix}
1&-t&0&\cdots&0\\
1&1&-t&\cdots&0\\
\vdots&\vdots&\ddots&\ddots&\vdots\\
1&0&\cdots&1&-t\\
1&0&\cdots&0&1
\end{bmatrix}_{(q-2)\times (q-2)};  
\end{split}      
\end{equation}
where for the last equality the determinant from the previous line is expanded with respect to its first row. Expanding the determinant appearing in the second line of \eqref{auxiliary 1} down its first column readily results in $1+t+\dots+t^{q-3}$ (the corresponding minors are all upper-triangular). Plugging back in \eqref{auxiliary 1}, we obtain the desired characteristic polynomial as 
$$
(t-1)t^{q-1}-t\left(1+t+\dots+t^{q-3}\right)=t\left(t^{q-1}-t^{q-2}-t^{q-3}-\dots-t-1\right).
$$

\end{proof}

\begin{remark}\label{(+-+)}
As expected, the real quadratic rational maps $f=f^\times_{p/q}$ constructed in Proposition \ref{the family} are bimodal of shape $(+-+)$: Both critical points lie in a periodic orbit, so they belong to the image of 
$f\restriction_{\hat{\Bbb{R}}}:\hat{\Bbb{R}}\rightarrow\hat{\Bbb{R}}$
and the interval map $f\restriction_{f(\hat{\Bbb{R}})}:f(\hat{\Bbb{R}})\rightarrow f(\hat{\Bbb{R}})$
is thus bimodal. The shape is $(+-+)$ since by statement \ref{g} of the proposition, $f$ is decreasing on the interval formed by the critical points $x_0=c_0<x_1=c_1$ while increasing elsewhere. Indeed, given that $(-+-)$-bimodal maps admit attracting (real) fixed points (\cite[Lemma 10.1]{MR1246482}), no real hyperbolic bitransitive map can be $(-+-)$-bimodal. 
\end{remark}

\begin{remark}\label{q=2 irrelevant}
We mostly assume that $q\geq 3$ and disregard the component $\mathcal{H}_{1/2}$. This is due to the fact that this component is not entirely made up of degree zero maps because it intersects the symmetry locus; it contains the class of  $\frac{1}{z^2}$ as its center (cf. Figure \ref{fig:maincolored}), a map which has the non-trivial Möbius automorphism $z\mapsto -z$.  The entropy values $h_q$ of other real hyperbolic components $\mathcal{H}_{p/q}$ is positive once $q>2$ (see Lemma \ref{calculus}); hence  they are away from the symmetry locus and completely contained in the component of degree zero maps in $\mathcal{M}_2(\Bbb{R})$. 
\end{remark}

\begin{convention-proposition}\label{convention 1}
As  maps without labeled critical points, $f_{p/q}$ is M\"obius conjugate to $f_{1-(p/q)}$ -- a symmetry in parameters. Therefore, we assume $p/q\in(0,1/2)$ hereafter ($p/q=1/2$ is excluded by the preceding remark). We denote the corresponding hyperbolic component in the unmarked moduli space by $\mathcal{H}_{p/q}$ which is the image of  either of critically marked components $\mathcal{H}^\times_{p/q}$ or $\mathcal{H}^\times_{1-(p/q)}$.
\end{convention-proposition}
\begin{proof}
There is a real Möbius transformation preserving the set $\{\pm{\rm{i}}\}$ of fixed points, interchanging the critical points $c_0$ and $c_1$, and reversing the order of real line. Conjugating $f^{\times}_{p/q}$ with it results in another quadratic rational map satisfying all properties listed in Proposition \ref{the family} with $q-p$ in place of $p$. The uniqueness part of the proposition implies that this new map coincides with  
$f^{\times}_{(q-p)/q}$. 
\end{proof}

\begin{example}
The real quadratic map $z\mapsto\frac{1}{z^2}$ sends its critical points $0,\infty$ to each other. Conjugating it with an appropriate  M\"obius transformation so  that $c_0=0$ remains a critical point and $\pm{\rm{i}}$ become fixed points as required in statement \ref{c} of Proposition \ref{the family}, we obtain 
$$f^{\times}_{1/2}(z)=\frac{\sqrt{3}\left(\frac{-z+\sqrt{3}}{2z}\right)^2}{\left(\frac{-z+\sqrt{3}}{2z}\right)^2+2}
=\frac{\sqrt{3}(z^2-2\sqrt{3}z+3)}{9z^2-2\sqrt{3}z+3}.$$
\indent
Another example which could be calculated explicitly is the case of denominator $q=3$. The critical points $0,\infty$ of 
$z\mapsto 1-\frac{1}{z^2}$ lie in the orbit $0\mapsto\infty\mapsto 1\mapsto 0$. Again, this turns into $f^{\times}_{1/3}$
after a suitable conjugation. The complex fixed points are the non-real roots $r,\bar{r}$ of
$$
z^3-z^2+1=0.
$$
The transformation $z\mapsto\frac{z}{uz+v}$ where $u=-\Re(r)/\Im(r)$ and $v=|r|^2/\Im(r)$ takes $r$ and $\bar{r}$ to $+{\rm{i}}$ and $-{\rm{i}}$ while fixes $0$. Conjugating $z\mapsto 1-\frac{1}{z^2}$ with it, we arrive at
$$f_{1/3}^{\times}(z)=\frac{v^2z^2-(1-uz)^2}{u(v^2z^2-(1-uz)^2)+v^3z^2}
\approx \frac{1.7753z^2-2.3560z-1}{3.5339z^2+2.7753z+1.1780}.$$
\indent
Similarly, one can obtain 
$$
f_{2/3}^{\times}(z)\approx\frac{1.3876z^2+2.3560z+1}{-2.1914z^2+0.3876z+0.1645}
$$
via normalizing the map $z\mapsto\frac{1}{1-z^2}$ (which is conjugate to the original $z\mapsto 1-\frac{1}{z^2}$) with critical points $0,\infty$ whose critical orbit reads as $0\mapsto 1\mapsto\infty\mapsto 0$. 
\end{example}

The goal for the rest of this section is to study the points at which the components $\mathcal{H}_{p/q}$ touch the boundary of the compactification $\overline{\mathcal{M}_2(\Bbb{R})}$ from Proposition \ref{compactification}. We start with a parametrization of such a component.

Let $p/q\in \Bbb{Q}/\Bbb{Z}$ and $f^\times_{p/q}$ be as in Proposition \ref{the family}. As before, let  $\mathcal{H}^\times_{p/q}$ be the complex hyperbolic component in the moduli space  \eqref{critically marked} of critically marked quadratic rational maps which has $\langle f^\times_{p/q}\rangle$ as its center. 
Suppose $g^{\times}$ is a rational map $g$ along with a labeling of its critical points determining a class 
$\langle g^{\times}\rangle$ in $\mathcal{H}^\times_{p/q}$.  Following Convention \ref{convention 1}, we assume $p/q\neq 1/2$ .  As mentioned in \S\ref{background}, Rees \cite{MR1047139} shows that under this assumption $\mathcal{H}^\times_{p/q}$ is homeomorphic to an open disk and is therefore simply-connected.  As $\langle g^{\times}\rangle$ varies in $\mathcal{H}^\times_{p/q}$, its Julia set moves holomorphically through a motion respecting the dynamics \cite{MR732343}.  Moreover, since elements of  $\mathcal{H}^\times_{p/q}$ are assumed critically marked, this implies that we may consistently label the critical points as $\langle g^{\times}\rangle$ varies in $\mathcal{H}^\times_{p/q}$; and so we may consistently index its attractors $a_j(g)$ along with  immediate basins $\Omega_j(g)$: Just like the setting of Proposition \ref{the family}, the points $\{a_j(g)\}_{j=0}^{q-1}$ constitute an attracting cycle with $\Omega_j(g)$ the immediate basin of $a_j(g)$, the dynamics is given by $a_j(g)\stackrel{g}{\mapsto}a_{j+p}(g)$,  and the critical points of $g$ lie in $\Omega_0(g)$ and $\Omega_1(g)$.  As $\langle g^{\times}\rangle$ is perturbed away from the center $\langle f^{\times}_{p/q}\rangle\in\mathcal{H}^\times_{p/q}$, the distinguished repelling $q$-cycle provided by statement \ref{j} of Proposition \ref{the family} moves holomorphically as well. So we obtain a locally consistently indexed repelling $q$-cycle. Because of the simple-connectedness, this cycle may be globally consistently defined throughout $\mathcal{H}^\times_{p/q}$.  It follows that there exist unique Riemann maps, extending to the boundary, such that  each
\begin{equation}\label{Riemann map}
\phi_j: (\overline{\Bbb{D}}, 0, 1)\rightarrow(\overline{\Omega_j(g)}, a_j(g), \zeta_j(g))
\end{equation}
varies continuously with $g$ ($\Bbb{D}$ denotes the open unit disk).  
We obtain in this way a holomorphic conjugacy 
$$g \circlearrowleft \bigsqcup_{j \in\Bbb{Z}/q\Bbb{Z}} \left(\Omega_j(g), a_j(g), \zeta_j(g)\right)\rightarrow
\Bbb{Z}/q\Bbb{Z}\times (\Bbb{D}, 0, 1)\circlearrowright$$
from the restriction of $g$ to the immediate basin of its attractor to a collection of proper holomorphic maps acting on the disjoint union of $q$ copies of the disk, and permuting the components by adding $p$ to the first coordinate, modulo $q$.\\ 
\indent
Via the normalizations, the composition 
\begin{equation}\label{composition}
\beta_j:=\phi_{j+p}^{-1}(g)\circ g \circ \phi_j(g):(\Bbb{D}, 0, 1)\rightarrow(\Bbb{D}, 0, 1)
\end{equation}
 is either the identity map  -- which is the case of $j \neq 0, 1$ where the map is unramified -- or is a quadratic Blaschke product of the form 
\begin{equation}\label{Blaschke}
\omega\mapsto B_a(\omega):=\frac{1-\overline{a}}{1-a}\cdot \omega\cdot\frac{\omega-a}{1-\overline{a}\omega}, 
a\in\Bbb{D};
\end{equation}
when $j = 0$ or $1$, in which case we are dealing with a degree two self-map of the unit disk since 
$\Omega_0(g)$ and $\Omega_1(g)$ each contains one critical
point of $g$. Let 
\begin{equation}\label{Blaschke space}
\mathcal{B}:=\{(B_a, B_b)\mid a,b\in\Bbb{D}\} \cong \Bbb{D}\times\Bbb{D}
\end{equation}
denote the space of ordered pairs of such normalized quadratic Blaschke products.   The previous paragraph yields a real-analytic map 
$$\mathcal{H}^\times_{p/q}\rightarrow\mathcal{B}$$
given by 
\begin{equation}\label{Blaschke coordinates}
\langle g^{\times}\rangle\mapsto\left(\phi_p^{-1} \circ g \circ \phi_0, \phi_{1+p}^{-1}\circ g \circ \phi_1\right).
\end{equation}
Milnor \cite[Theorem 5.1]{MR2964079} shows that this map is a real-analytic diffeomorphism.  This is where we use maps with marked critical points: Milnor's argument needs $\mathcal{H}^\times_{p/q}$ to be simply-connected to insure that the monodromy of the marked repelling cycle $\left\{\zeta_j(g)\right\}_{j=0}^{q-1}$ is trivial as $\langle g^{\times}\rangle$ varies in $\mathcal{H}^\times_{p/q}$. Below are three simple facts about the parametrization \eqref{Blaschke coordinates}:
\begin{itemize}
\item Pairs of real Blaschke products (i.e. $a,b\in (-1,1)$ in \eqref{Blaschke space}) correspond to real classes in $\mathcal{H}^\times_{p/q}$: just notice that for real maps $g$ the attracting periodic points $a_j(g)$ are real, the basins of attraction $\Omega_j(g)$ are symmetric with respect to the real axis, and  the repelling periodic points 
$\zeta_j(g)$ on the basin boundaries are real too.  Hence \eqref{Blaschke coordinates} furthermore yields a parametrization of the underlying real hyperbolic component. 
\item For a map $g^\times$ given by $(B_a,B_b)$ in the Blaschke coordinate system \eqref{Blaschke coordinates}, the first-return map to the immediate basin $\Omega_0(g)$ containing the first critical point is conjugate to  $B_b\circ B_a:\Bbb{D}\rightarrow\Bbb{D}$. This is due to the fact that the composition $\beta_{(q-1)p}\circ\dots\circ\beta_p\circ\beta_0$ of maps \eqref{composition} (indices modulo $q$ as always) coincides with $\phi_0^{-1}\circ g^{\circ q}\circ\phi_0$ while on the other hand, all constituent parts of the composition are trivial except $\beta_0$ and $\beta_1$ which, according to the definition of the coordinate system, are equal to $B_a$ and $B_b$ respectively. 
\item At the center of the hyperbolic component the corresponding Blaschke products \eqref{Blaschke} are equal to $B_0(\omega)=\omega^2$; this is where  the attracting periodic points $a_0(g)$ and $a_1(g)$ (corresponding to $\omega=0$ under the Riemann map \eqref{Riemann map}) become critical (so $\omega=0$ needs to be a critical point of \eqref{Blaschke}). Notice that the first-return map is then $(\omega\mapsto\omega^2)\circ(\omega\mapsto\omega^2)$; compare with statement \ref{k} of Proposition \ref{the family}. 
\end{itemize}
In view of the facts just mentioned, we shall need the following proposition in order to analyze the first-return map for certain family of real maps in $\mathcal{H}_{p/q}$ to be introduced in the subsequent Proposition \ref{degeneration}.

\begin{proposition}\label{parametrization}
For $0 \leq t<1$, consider the map 
$$ B_t: (\overline{\Bbb{D}}_\omega,0,1) \to (\overline{\Bbb{D}}_\omega,0,1)$$
given by the Blaschke product
$$B_t(\omega)=\omega\cdot\frac{\omega-t}{1-t\omega}$$
as introduced before in \eqref{Blaschke}. 
Let $G_t$ be $B_t \circ B_t: \Bbb{D}_\omega\rightarrow\Bbb{D}_\omega.$
Then 
\begin{enumerate}
\item $B_t(\omega)=-t\omega+O(\omega^2)$ as $\omega\to 0$;
\item $B_t(1)=1$, $B_t'(1)=\frac{2}{1-t}$;
\item for $0<t<1$, denoting the unique critical point of $B_t$ in $\Bbb{D}_\omega$ by $c(t)$,  we have $B_t(c(t))<0<c(t)$;
\item $G_t(w)= t^2\omega+O(\omega^2)$ as $\omega\to 0$;
%\item $B_t\circ B_t(1)=1$ 
\item for $0<t<1$,  let $\psi_t: (\Bbb{D}_\omega,0) \to (\Bbb{C}_\xi, 0)$ be the unique holomorphic linearizing map satisfying $\psi_t\circ G_t=t^2\cdot\psi_t$ and $\psi_t(c(t))=1$. Let $X_t:=\left(\Bbb{C}_{\xi}-\{0\}\right)\big/\xi\sim t^2\xi$ be the quotient torus and $\pi_t: \Bbb{C}_\xi-\{0\} \to X_t$  the natural projection.  Let 
$X_t^*:=X_t - \pi_t\circ\psi_t(\{c(t), B_t(c(t))\})$ 
be the quotient punctured torus associated to the attractor at the origin for $G_t$, equipped with its unique hyperbolic metric.  
Then $\gamma_{t\pm}:=\pi_t(\{\pm s\cdot i \mid s>0\})$ are two homotopically distinct closed hyperbolic geodesics on $X_t^*$ whose hyperbolic lengths tend to zero as $t \uparrow 1$.  Under the natural map 
$\Bbb{D}_\omega-\{0\}\rightarrow\Bbb{C}_{\xi}-\{0\}\rightarrow X_t$, these geodesics each have a unique lift whose closure joins the origin to a non-real, repelling fixed point of $B_t \circ B_t$; we denote this pair of complex conjugate repelling fixed points by  
$\rho_\pm(t) \in\partial\Bbb{D}_\omega$;
\item  $G_t'(\rho_{\pm}(t))=\lambda(t):=1+(t-1)(t-3)\to 1$ as $t \uparrow 1$.    
\end{enumerate}
\end{proposition}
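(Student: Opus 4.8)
The strategy is to pin down the pair $\rho_\pm(t)$ as an honest period-two cycle of $B_t$ and then to compute the multiplier of $G_t=B_t\circ B_t$ at such a point by the chain rule. First I would determine the fixed points of $B_t$: clearing the denominator in $B_t(\omega)=\omega$ gives $\omega(1+t)(\omega-1)=0$, so — counting the fixed point at $\infty$ — the fixed set of $B_t$ is $\{0,1,\infty\}$, which is entirely real. The map $G_t$ has degree four, hence five fixed points counted with multiplicity; three of these are $0,1,\infty$, so the remaining two form a (possibly degenerate) $2$-cycle of $B_t$. In particular the non-real, complex-conjugate repelling fixed points $\rho_\pm(t)$ of $G_t$ furnished by the sixth assertion of Proposition~\ref{parametrization} must be exactly this pair; the geometric description of $\rho_\pm(t)$ via the geodesics $\gamma_{t\pm}$ plays no further role.

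Next I would locate this cycle algebraically. Writing $u=B_t(\omega)$, the equations $B_t(\omega)=u$ and $B_t(u)=\omega$ read $\omega^{2}+t(u-1)\omega-u=0$ and $u^{2}+t(\omega-1)u-\omega=0$; subtracting them and dividing by $\omega-u\neq 0$ yields $\omega+u=t-1$, and substituting $u=t-1-\omega$ back gives
$$\omega^{2}+(1-t)\,\omega+1=0.$$
For $0<t<1$ the discriminant $(1-t)^{2}-4$ is negative, so this quadratic has two genuine non-real roots, necessarily a complex conjugate pair on $\partial\Bbb{D}_\omega$ (so the cycle is non-degenerate and disjoint from $\{0,1,\infty\}$). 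Hence $\{\rho_+(t),\rho_-(t)\}$ is precisely the root set, with $\rho_++\rho_-=t-1$ and $\rho_+\rho_-=1$.

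Finally I would compute the multiplier. Since $B_t(\rho_+)=\rho_-$ and $B_t(\rho_-)=\rho_+$, the chain rule gives $G_t'(\rho_\pm(t))=B_t'(\rho_+)\,B_t'(\rho_-)$. A direct differentiation gives $B_t'(\omega)=\dfrac{-t\omega^{2}+2\omega-t}{(1-t\omega)^{2}}$. On the cycle, $\omega^{2}=(t-1)\omega-1$, which collapses the numerator to $(2+t-t^{2})\,\omega=(2-t)(1+t)\,\omega$, while $(1-t\rho_+)(1-t\rho_-)=1-t(t-1)+t^{2}=1+t$. Therefore
$$G_t'(\rho_\pm(t))=\frac{(2-t)^{2}(1+t)^{2}\,\rho_+\rho_-}{(1+t)^{2}}=(2-t)^{2}=t^{2}-4t+4=1+(t-1)(t-3)=\lambda(t),$$
which indeed tends to $1$ as $t\uparrow 1$; note also $(2-t)^{2}>1$ for $0<t<1$, re-confirming that the cycle is repelling. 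There is no genuine obstacle here: the whole statement is elementary algebra, and the only point needing a moment's care is the bookkeeping in the first paragraph — that the $\rho_\pm(t)$ of assertion (6) really are the algebraic period-two cycle — which is immediate once one knows the fixed points of $B_t$ are the three real points $0,1,\infty$.
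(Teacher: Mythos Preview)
Your argument for part (6) is correct, but it proceeds quite differently from the paper's. The paper never locates $\rho_\pm(t)$ explicitly. Instead, it notes that the involution $\omega\mapsto 1/\omega$ commutes with $B_t$ and hence with $G_t$, so the two non-real fixed points share a common multiplier $\lambda(t)$; the same involution shows $\infty$ is fixed with multiplier $t^2$. With the multipliers at $0,1,\infty$ equal to $t^2,\ \bigl(\tfrac{2}{1-t}\bigr)^2,\ t^2$, the paper plugs into the holomorphic fixed point formula for the degree-four map $G_t$,
\[
\frac{2}{1-\lambda(t)}+\frac{2}{1-t^2}+\frac{1}{1-\bigl(\tfrac{2}{1-t}\bigr)^2}=1,
\]
and solves this single linear equation for $\lambda(t)$. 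Your route, by contrast, identifies $\rho_\pm(t)$ as the genuine $2$-cycle of $B_t$, pins it down as the root set of $\omega^2+(1-t)\omega+1=0$, and then computes $G_t'(\rho_\pm)=B_t'(\rho_+)B_t'(\rho_-)$ via the chain rule and Vieta. Your approach is more elementary and self-contained (no appeal to the rational fixed point formula) and yields the explicit location of $\rho_\pm(t)$ on $\partial\Bbb{D}$ as a by-product; the paper's approach is shorter and more conceptual, leveraging the $\omega\mapsto 1/\omega$ symmetry to bypass all the algebra. Both land on $(2-t)^2=1+(t-1)(t-3)$. One small slip: the points $\rho_\pm(t)$ are actually introduced in assertion (5), not (6), though this does not affect your argument.
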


\begin{proof}
Properties (1)-(4) follow by direct computation.  In (5), the two lifts near the origin must terminate at a fixed point since they have finite length and are invariant; they have finite length since $G_t$ is uniformly expanding near the circle. To prove (6), first notice that the multipliers of conjugate fixed points $\rho_{\pm}(t)\in\Bbb{D}_\omega$ must be the same since these points are mapped to one another via  $\omega\mapsto\frac{1}{\omega}$, a transformation which commutes with the Blaschke product $B_t$ and hence with $G_t=B_t\circ B_t$. Denoting the multiplier $G_t'(\rho_{\pm}(t))$ by $\lambda(t)$, we apply the holomorphic fixed point formula (\cite[\S12]{MR2193309}) to the degree four rational maps $G_t=G_t(\omega)$. Aside from non-real fixed points $\rho_+(t)$ and $\rho_-(t)$, we already know that $\omega=0$ and $\omega=1$ are also fixed points and of multipliers $t^2$ and 
$\left(\frac{2}{1-t}\right)^2$ respectively. Applying the automorphism  $\omega\mapsto\frac{1}{\omega}$ of $G_t$, $\omega=\infty$ is also a fixed point of multiplier $t^2$. Plugging in the   holomorphic fixed point formula yields
$$
\frac{2}{1-\lambda(t)}+\frac{2}{1-t^2}+ \frac{1}{1-\left(\frac{2}{1-t}\right)^2}=1.  
$$   
Solving for $\lambda(t)$, we obtain $\lambda(t)=1+(t-1)(t-3)$.
\end{proof}

\begin{figure}[t]
\begin{center}
\includegraphics[width=10cm]{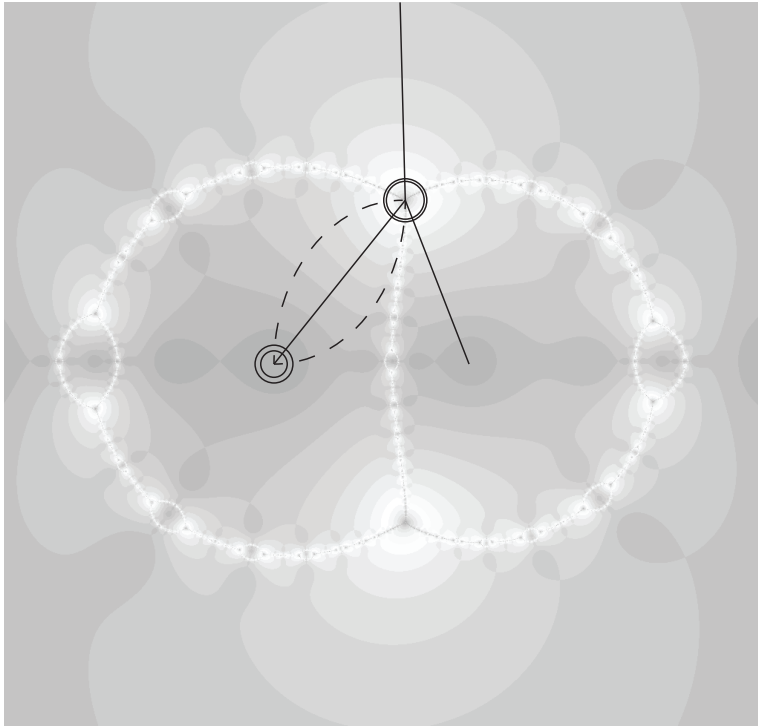}
\end{center}
\caption{Dynamical plane of $f_{p/q,t}$ for $p/q=1/3$ and $t=0.45$. Drawn as circles, not to scale, are fundamental domains for the local return map at the attractor (lower) and a repelling fixed point.  Drawn as solid lines are the three lifts of the geodesics $\gamma_{t,+}$ joining each element of the attracting 3-cycle (normalized here to be $0, 1, \infty$) to the repelling fixed point.  The region between the dashed lines is the lift of an annular neighborhood of $\gamma_{t,+}$ whose modulus tends to infinity as $t \uparrow 1$. This region projects to the quotient torus of the repelling fixed point, forcing its multiplier $\mu_+(t)$ to tend to ${\rm{e}}^{\frac{2\pi {\rm{i}}}{3}}$ as $t \uparrow 1$.  Petersen's estimates utilized in the proof of  Proposition \ref{degeneration} are in terms of the multipliers of repelling fixed points $\lambda(t)$ for the Blaschke product of the return map on the attracting basin.  The situation in the lower-half plane is symmetric, via reflection in the real axis.} 
\label{fig:ply}
\end{figure}

The previous proposition places us in a general situation where so-called ``pinching quasi-conformal deformations'' lead to degenerating families of rational maps. The general idea: on the quotient Riemann surface corresponding to a collection of attracting (or parabolic) basins, one finds a disjoint family of simple closed hyperbolic geodesics, and a corresponding finite collection of lifts (under the natural projection) of these geodesics in the dynamical plane whose closures separate the Julia set. Under these general conditions, pinching the geodesics leads to a family of rational maps that diverges in moduli space, i.e. degenerates; see  e.g. \cite{MR1914001} for details and \cite{MR2691488} for combinatorial conditions that lead to such degenerations and for analogies with Kleinian groups.  \\
\indent
Our situation is combinatorially very simple, and we employ a more direct argument of Peterson \cite[Theorem C]{MR1257034} that leads easily to concrete estimates.  The idea is demonstrated in Figure \ref{fig:ply} and its caption. The lune-shaped region between the dashed lines is a connected component of the lift to the dynamical plane of an annular regular neighborhood $A_t$  of the geodesic corresponding to the imaginary axis on the quotient torus $X_t$ corresponding to the attractor $a_0(t)$. As $t \uparrow 1$, we may choose this neighborhood so that it becomes wider and wider, i.e. $\mod(A_t) \to \infty$.  Examining the endpoint of the lune near the repelling fixed point shows that $A_t$ embeds holomorphically into the quotient torus corresponding to the repelling fixed point common to each $\partial \Omega_j(t)$ at ${\rm{i}}$ (cf. statement \ref{i} of Proposition \ref{the family}).  The core curve of $A_t$ under this embedding is a curve with slope $p/q$. Thus as $t \uparrow 1$ the multiplier at the repelling fixed point must tend to ${\rm{e}}^{\frac{2\pi{\rm{i}}p}{q}}$. Peterson's result gives a refinement of this, based on the observation that the multipliers of  $\rho_\pm(t)$ can be used to give a bigger lower estimate for the conformal width of the lune in the quotient torus corresponding to the fixed point at ${\rm{i}}$.

\begin{proposition}\label{degeneration}

Fix a rational number $p/q$ in $(0,1/2)$. For $0<t<1$, let 
$$\langle f_t^\times:=f^\times_{p/q, t} \rangle\in \mathcal{H}^\times_{p/q}$$ 
be a point of the hyperbolic component which in the  coordinate system  \eqref{Blaschke coordinates} is given by the pair 
$(B_t,B_t)$ of Blaschke product with $B_t$ as in Proposition \ref{parametrization}, and form the corresponding return map $G_t(w)=B_t \circ B_t$. For convenience, keeping to  work with the normalization from part \ref{c} of Proposition \ref{the family}, we  choose a unique normalized representative $f_t^\times$ so that $c_0=0$ and the non-real fixed points are $\pm{\rm{i}}$. 
As usual, denote the underlying unmarked map by $f_t:=f_{p/q,t}$. As in Proposition \ref{parametrization}, let $\lambda(t)$ be the multiplier of the non-real fixed points $\rho_\pm(t)$ of $G_t$.  Let $\mu_{\pm}(t):=f_t'(\pm{\rm{i}})$ be the multipliers of $f_t$ at the non-real fixed points. Then
there exist  logarithms $M_\pm$ of $\mu_\pm$ such that 
$$|M_\pm - 2\pi {\rm{i}} (\pm p/q)| < \log(\lambda(t))=\log(1+(t-1)(t-3));$$
and so 
$$\mu_\pm(t) \to {\rm{e}}^{\pm\frac{2\pi{\rm{i}}p}{q}} \;\text{as}\;  t \uparrow 1.$$
In particular, invoking Proposition \ref{compactification}, the curve 
$\left\{\langle f_t=f_{p/q,t}\rangle\right\}_{t\in [0,1)}$ located inside the hyperbolic component $\mathcal{H}_{p/q}\subset\mathcal{M}_2(\Bbb{R})$ 
starts from $\langle f_{p/q,0}=f_{p/q}\rangle$
and tends to the point 
${\rm{e}}^{\frac{2\pi{\rm{i}}p}{q}}\in{\rm{S}}^1_\infty$ at infinity as $t \uparrow 1$.
\end{proposition}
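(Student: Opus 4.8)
The plan is to run the ``pinching'' argument sketched after Proposition~\ref{parametrization}, and then to make it quantitative by invoking Peterson's Theorem~C~\cite{MR1257034}. First I would transfer the data of Proposition~\ref{parametrization} into the dynamical plane of $f_t$. By the second bullet preceding Proposition~\ref{parametrization}, the first-return map of the immediate basin $\Omega_0(f_t)$ to itself is conjugate, via the Riemann map $\phi_0$ of~\eqref{Riemann map}, to $G_t=B_t\circ B_t$; hence the torus $X_t$ and the punctured torus $X_t^*$ of Proposition~\ref{parametrization}(5) are identified with the quotient (punctured) torus of the attractor $a_0(f_t)$, and the two short geodesics $\gamma_{t\pm}$ lift to arcs in $\Omega_0(f_t)$ whose closures join $a_0(f_t)$ to $\phi_0(\rho_\pm(t))\in\partial\Omega_0(f_t)$. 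I would then check that $\phi_0(\rho_\pm(t))=\pm{\rm{i}}$ for $0<t<1$: the Jordan curve $\partial\Omega_0(f_t)$ carries, through $\phi_0$, the degree-four circle map $G_t\restriction_{\partial\Bbb{D}}$, which has exactly three fixed points, one of which is the real period-$q$ point $\zeta_0$ of statements~\ref{j} and~\ref{k} of Proposition~\ref{the family}, so the other two are $\phi_0(\rho_\pm(t))$; since $\pm{\rm{i}}$ are non-real $f_t$-fixed, hence $f_t^{\circ q}$-fixed, points lying on $\partial\Omega_0(f_t)$ by statement~\ref{i} of Proposition~\ref{the family}, they must coincide with $\phi_0(\rho_\pm(t))$.

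Next I would build the ``lune''. Since the hyperbolic length of $\gamma_{t+}$ in $X_t^*$ tends to $0$ as $t\uparrow 1$ (Proposition~\ref{parametrization}(5)), the collar lemma provides annular neighborhoods $A_t$ of $\gamma_{t+}$ with $\mod(A_t)\to\infty$. The component of the preimage of $A_t$ in $\Omega_0(f_t)$ that contains the distinguished lift of $\gamma_{t+}$ is a ``lune'' $\widetilde{A}_t$ one of whose ends approaches ${\rm{i}}$, and $f_t$ carries the $q$ lunes $\widetilde{A}_t^{(j)}\subset\Omega_j(f_t)$ cyclically onto one another with combinatorial rotation number $p/q$, which is precisely the rotation blown up in Proposition~\ref{the family}. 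Linearizing $f_t$ at its repelling fixed point ${\rm{i}}$ and passing to the attached quotient torus $Y_t$, whose modular parameter is $\tfrac{M_+}{2\pi{\rm{i}}}$ for a suitable logarithm $M_+$ of $\mu_+(t)$, the union of the $q$ lunes descends to an essential annulus holomorphically embedded in $Y_t$ whose core curve represents the slope-$p/q$ homotopy class. Because this embedded annulus contains a copy of $A_t$ its modulus tends to $\infty$, and a slope-$p/q$ essential annulus of unbounded modulus can only lie in a torus whose modular parameter $\tfrac{M_+}{2\pi{\rm{i}}}$ approaches the rational number $p/q$; this already forces $\mu_\pm(t)\to{\rm{e}}^{\pm\frac{2\pi{\rm{i}}p}{q}}$.

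To upgrade this to the stated inequality I would invoke Peterson's Theorem~C~\cite{MR1257034} (in the form used in~\cite{MR2691488}): the multiplier $\lambda(t)=G_t'(\rho_\pm(t))$ of the two repelling fixed points of the return map on the attracting basin yields an explicit lower bound for the conformal modulus of the embedded lune in $Y_t$, which pins $M_+$ to within $\log\lambda(t)$ of $2\pi{\rm{i}}(p/q)$; the estimate for $M_-$ then follows by taking $M_-:=\overline{M_+}$, since $f_t$ is real and so $\mu_-(t)=\overline{\mu_+(t)}$. As $\lambda(t)=1+(t-1)(t-3)\to 1$ by Proposition~\ref{parametrization}(6), we get $\log\lambda(t)\to 0$ and hence $\mu_\pm(t)\to{\rm{e}}^{\pm\frac{2\pi{\rm{i}}p}{q}}$. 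For the last assertion: the third bullet preceding Proposition~\ref{parametrization} identifies $\langle f_{p/q,0}\rangle$ with the center $\langle f_{p/q}\rangle$, so the curve starts there; and the holomorphic fixed point formula~\eqref{fixed point formula} applied with $\mu_1=\mu_+(t)$, $\mu_2=\mu_-(t)$, together with the elementary identity $\tfrac{1}{1-{\rm{e}}^{{\rm{i}}\theta}}+\tfrac{1}{1-{\rm{e}}^{-{\rm{i}}\theta}}=1$ valid for $\theta\not\equiv 0\pmod{2\pi}$, gives $\tfrac{1}{1-\mu_3(t)}\to 0$, i.e. $\mu_3(t)\to\infty$, so $\langle f_t\rangle$ leaves every compact subset of $\mathcal{M}_2(\Bbb{R})$ (on which the multipliers stay bounded). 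Proposition~\ref{compactification}, applied with the fixed point ${\rm{i}}$ and $f_t'({\rm{i}})=\mu_+(t)\to{\rm{e}}^{\frac{2\pi{\rm{i}}p}{q}}$, then identifies the limit in $\overline{\mathcal{M}_2(\Bbb{R})}$ as ${\rm{e}}^{\frac{2\pi{\rm{i}}p}{q}}\in{\rm{S}}^1_\infty$, which lies on the arc $\left\{{\rm{e}}^{{\rm{i}}\theta}\mid\theta\in[0,\pi]\right\}$ since $p/q\in(0,1/2)$.

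The step I expect to be the main obstacle is the combinatorial identification of the embedded annulus in $Y_t$ with the slope-$p/q$ class: this requires tracking how the $f_t$-iterates of $\widetilde{A}_t$ wind about ${\rm{i}}$, equivalently the cyclic order in which the basins $\Omega_j(f_t)$ accumulate at their common boundary fixed point ${\rm{i}}$, and the relevant inputs are the ``internal angle $2\pi/3$'' normalization from the proof of statement~\ref{i} of Proposition~\ref{the family} and the deployment of the distinguished repelling cycle in statement~\ref{j}. A close second obstacle is extracting the exact constant $\log\lambda(t)$ from~\cite[Theorem~C]{MR1257034}, rather than a merely comparable bound; but this is precisely the situation that theorem is tailored for.
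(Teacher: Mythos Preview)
Your proposal is correct and follows essentially the same approach as the paper. The only difference is packaging: the paper places the geometric setup---the lune, the embedding into the quotient torus at $\pm{\rm{i}}$, and the slope-$p/q$ identification---in the discussion and figure preceding the proposition, so its formal proof is just a dictionary matching the setting to Petersen's Theorem~C (with $R=f_{p/q,t}$, $\alpha=\pm{\rm{i}}$, rotation number $p/q$, $N=q$, and all conjugate multipliers equal to $\lambda(t)$) followed by the trivial bounds $B\le 1$, $|\sin\theta|\le 1$, $q\ge 2$ on Petersen's inequality to obtain the displayed estimate; your two ``obstacles'' are thus dispatched by citation rather than by a new argument.
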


\begin{proof}
We apply Petersen's estimate \cite[Theorem C]{MR1257034}. We connect our setting with his as follows. The map is $R:=f_{p/q,t}$, the periodic point is fixed and is $\alpha:=\pm{\rm{i}}$ and has rotation number $p/q$, the multiplier at $\alpha$ is $\mu_\pm(t)$, $N:=q$, and what Petersen terms the ``conjugate multipliers'' $\lambda_1=\dots=\lambda_N$ are the multiplier $\lambda(t)$ at the fixed points $\rho_\pm(t)$ of  the Blaschke products $G_t$. Converted to our setting, his theorem says 
$$|M_\pm - 2\pi {\rm{i}} (\pm p/q)| \leq B \cdot 2\sin\theta \cdot \frac{\log \lambda(t)}{q^2N}$$
where $0<B\leq 1$ and $\theta$ is the angle between $2\pi {\rm{i}}$ and $M-2\pi {\rm{i}} p/q$. Applying the trivial bounds $B \leq 1$,  $|\sin\theta| \leq 1$, and $q \geq 2$ yields the estimate. In the holomorphic fixed point formula \eqref{fixed point formula} if two of the multipliers (here $\mu_{\pm}$) tend to reciprocal numbers, the third one (here the multiplier of the real fixed point) must blow up.  Hence the curve 
$\left\{\langle f_t=f_{p/q,t}\rangle\right\}_{t\in [0,1)}\subset\mathcal{M}_2(\Bbb{R})$ (and thus the real hyperbolic component $\mathcal{H}_{p/q}$)
is unbounded in $\mathcal{M}_2(\Bbb{R})$, and as $t \uparrow 1$ tends to the ideal point 
${\rm{e}}^{\frac{2\pi{\rm{i}}p}{q}}$ on the boundary of the compactification $\overline{\mathcal{M}_2(\Bbb{R})}$.
\end{proof}

\begin{figure}[ht!]
\center
\includegraphics[width=10cm, height=10cm]{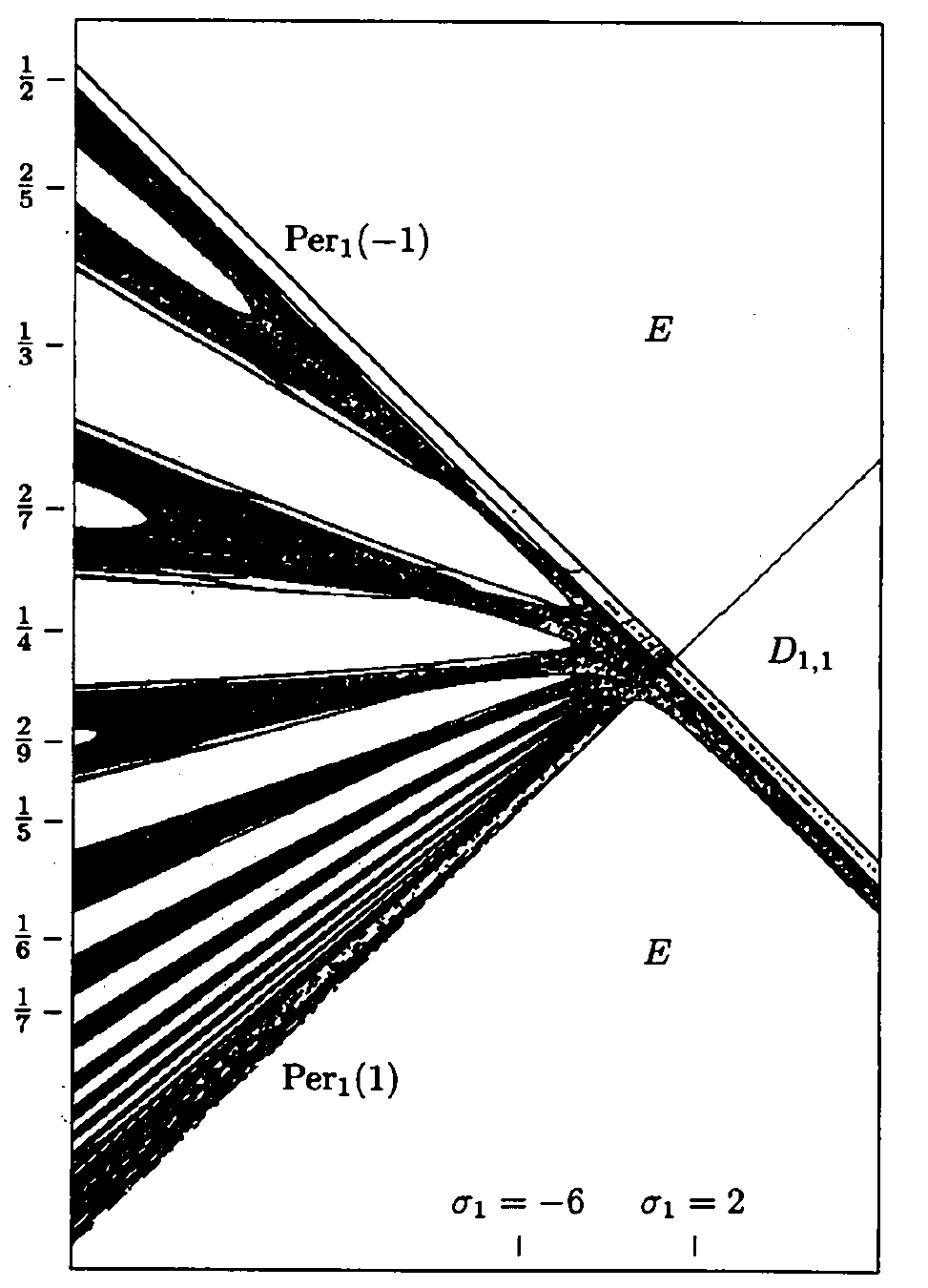}
\caption{An illustration of real bitransitive hyperbolic components  adapted from  \cite[Figure 17]{MR1246482}. Compare the limit points with Remark \ref{cor:singleton}.}
\label{fig:components}
\end{figure}

\begin{remark}\label{cor:singleton}
By Proposition \ref{degeneration}, the closure of the hyperbolic component $\mathcal{H}_{p/q}\subset\mathcal{M}_2(\Bbb{R})$  in $\overline{\mathcal{M}_2(\Bbb{R})}$
meets ${\rm{S}}^1_\infty$ at the point ${\rm{e}}^{\frac{2\pi{\rm{i}}p}{q}}$. It is known that the only possible limit points of $\mathcal{H}_{p/q}$ at infinity correspond to multipliers ${\rm{e}}^{\frac{2\pi{\rm{i}}r}{s}}$ where $s \leq q$ \cite[Proposition 1]{MR1764925}.
\end{remark}

\section{Proof of the Main Theorem}\label{proof}
We prove  Theorem \ref{main} in this section by exploiting the results of \S\ref{construction}. The key observation regarding the hyperbolic components $\mathcal{H}_{p/q}$ constructed in that section is the independence of the real entropy value attained over them from the numerator $p$. This existence of disjoint real hyperbolic components of the same real entropy is an essential insight in the core of the proof. 

\begin{proof}[Proof of Theorem \ref{main}]
We first introduce the number $h'$. Recall that the entropy values $h_q$ have appeared in part \ref{h} of Proposition \ref{the family}: For $q>2$, the value $h_q$ of the real entropy function over the hyperbolic components $\mathcal{H}_{p/q}$ is the logarithm of the largest root of the polynomial 
$P_q(t):=t^q-2t^{q-1}+1$ in $(1,+\infty)$.
Form the  sequence $\left\{h_q\right\}_{q=3}^\infty$. A calculus argument indicates that  $\left\{h_q\right\}_{q=3}^\infty$  is a strictly increasing sequence of positive numbers converging to $\log(2)$; this is the content of Lemma \ref{calculus} below.  We set $h'$ to be $h_3$. \\
\indent
Next, we shall prove that if the integer $q>3$ is large enough, say $q>q_0$, for any entropy value $h\in\left(h_3,h_q\right]$  the isentrope $h_\Bbb{R}=h$ is disconnected. This would finish the proof since the union of these intervals over $q\in\left\{q_0+1,q_0+2,\dots\right\}$  coincides with $\left(h',\log(2)\right)$.\\
\indent
Consider the compactification $\overline{\mathcal{M}_2(\Bbb{R})}\cong\overline{\Bbb{D}}$ of the real moduli space introduced in Proposition \ref{compactification} which was formed by adding the circle at infinity \eqref{circle at infinity}.  A portion of $\overline{\mathcal{M}_2(\Bbb{R})}$ encompassing the region of $(+-+)$-bimodal maps is illustrated in Figure \ref{fig:compactification}. The right boundary of this region is the vertical line $\sigma_1=-6$ which is the locus where one of the critical points is mapped to another; cf. Figure \ref{fig:main}. The real entropy  is monotonic along this line, and decreases as the other coordinate $\sigma_2$ increases; see Lemma \ref{monotone along -6} and Figure \ref{fig:graph} below. In the construction of Proposition \ref{the family}, note that $f_{p/q}$ belongs to this line if $p=1$.
%a post-critical map associated with a fraction  belongs to this line once the numerator is $1$. 
In particular, the classes $\left\langle f_{1/3}\right\rangle$ and $\left\langle f_{1/q}\right\rangle$ lie on this line. In contrast, a point such as 
$\left\langle f_{p/q}\right\rangle$ where $p/q\in(1/3,1/2)$ does not belong to the line $\sigma_1=-6$ as the numerator is different from $1$; cf. Proposition \ref{the family}, statement \ref{f}. Such a numerator $p$ exists; in fact, if $q$ is larger than $q_0:=12$ we can pick $p$ so that 
$$1/q< 1/3<p/q<1/2;$$ 
see Lemma \ref{number-theoretic} below. (Keep in mind that in working with unmarked components $\mathcal{H}_{p/q}$ we take the fraction to be in $(0,1/2)$; cf. Convention \ref{convention 1}.)
Fixing such a $p$, we consider the disjoint curves 
$\left\{\langle f_{1/q,t}\rangle\right\}_{t\in [0,1)}$, $\left\{\langle f_{p/q,t}\rangle\right\}_{t\in [0,1)}$ and 
$\left\{\langle f_{1/3,t}\rangle\right\}_{t\in [0,1)}$ in the corresponding hyperbolic components. The entropy is constant along these curves with $h_\Bbb{R}\equiv h_q$ along the former two and $h_{\Bbb{R}}\equiv h_3$ along the third one.  They start at the points 
$\langle f_{1/q}\rangle$, $\langle f_{p/q}\rangle$ and $\langle f_{1/3}\rangle$ and, by Proposition \ref{degeneration}, as $t\uparrow 1$ tend to limit points ${\rm{e}}^{\frac{2\pi{\rm{i}}}{q}}$, ${\rm{e}}^{\frac{2\pi{\rm{i}}p}{q}}$ and ${\rm{e}}^{\frac{2\pi{\rm{i}}}{3}}$ respectively. Now form the subset $L$ of  $\mathcal{M}_2(\Bbb{R})$ comprising of the part of the line $\sigma_1=-6$ that lies above $\langle f_{1/3}\rangle$ along with the curve  
$\left\{\langle f_{1/3,t}\rangle\right\}_{t\in [0,1)}$. Due to the monotonicity along $\sigma_1=-6$, the values that $h_{\Bbb{R}}$ attains on $L$ belong to $\left[0,h'=h_3\right]$; thus $L$ does not intersect the isentrope $h_\Bbb{R}=h$. The closure of $L$ in the closed disk $\overline{\mathcal{M}_2(\Bbb{R})}$ touches the boundary circle at two points; and its complement in $\mathcal{M}_2(\Bbb{R})$ admits two connected components; see Figure \ref{fig:compactification}. The isentrope $h_\Bbb{R}=h$ is contained in this complement, and it suffices to show it has points in both components. The curves 
$\left\{\langle f_{1/q,t}\rangle\right\}_{t\in [0,1)}$ and $\left\{\langle f_{p/q,t}\rangle\right\}_{t\in [0,1)}$ lie in different connected components of $\mathcal{M}_2(\Bbb{R})-L$ due to the fact that, according to the way $p$ was chosen, the corresponding limit points ${\rm{e}}^{\frac{2\pi{\rm{i}}}{q}}$ and ${\rm{e}}^{\frac{2\pi{\rm{i}}p}{q}}$ of them on the boundary circle are located on different sides of the limit point ${\rm{e}}^{\frac{2\pi{\rm{i}}}{3}}$ of 
$\left\{\langle f_{1/3,t}\rangle\right\}_{t\in [0,1)}$.
Connect points from the former two curves to the latter curve via line segments  arbitrarily (as in Figure \ref{fig:compactification}). The interiors of the segments are in different components of $\mathcal{M}_2(\Bbb{R})-L$, and the  value $h\in\left(h',h_q\right]$ is realized by $h_\Bbb{R}$ on them by a simple application of the intermediate value theorem. This concludes the proof.

\begin{figure}[ht!]
\center
\includegraphics[height=10cm,width=11cm]{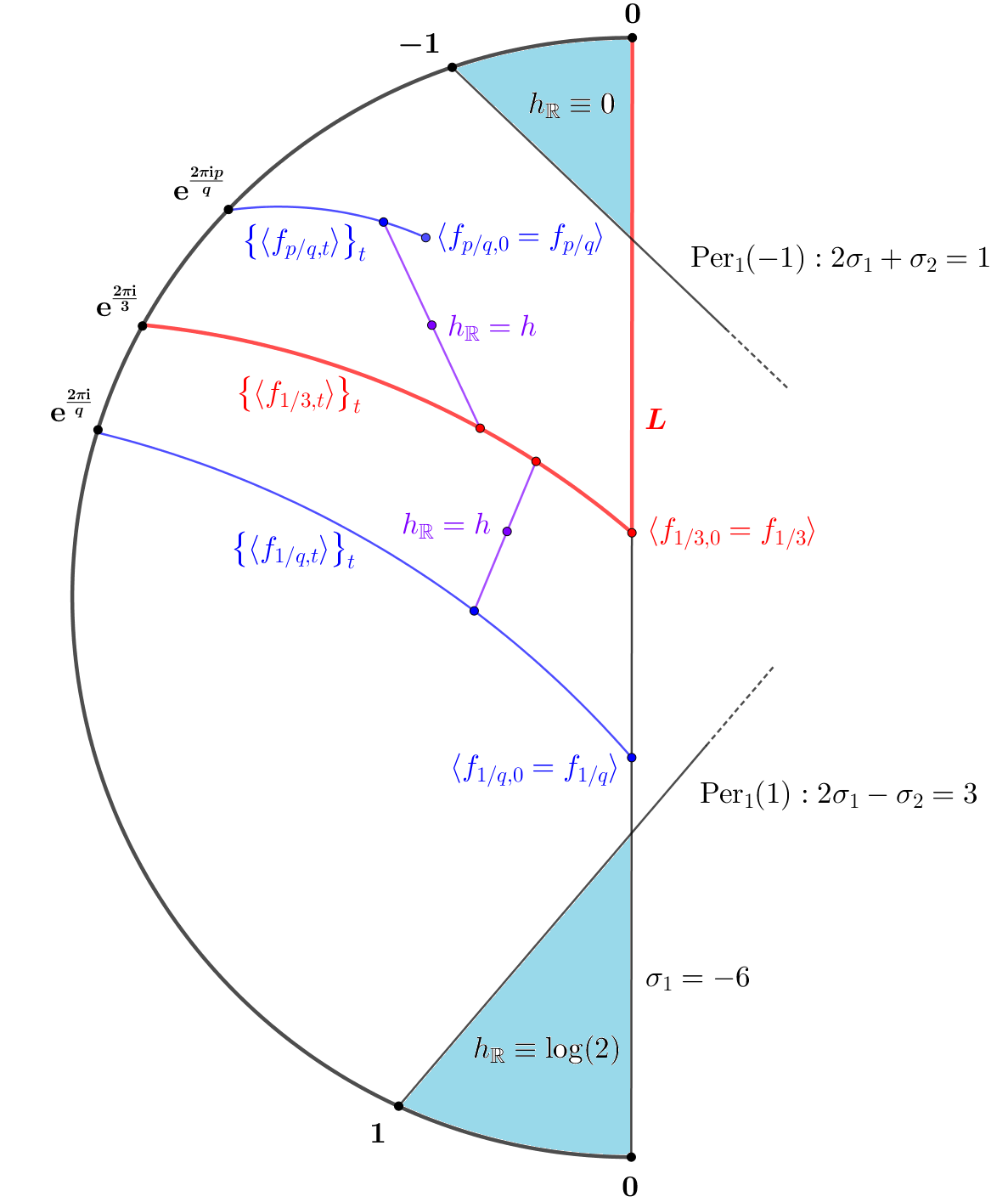}
\caption{The portion of the compactification $\overline{\mathcal{M}_2(\Bbb{R})}$ of $\mathcal{M}_2(\Bbb{R})$ which is located to the left of the post-critical line $\sigma_1=-6$; compare with Figure \ref{fig:main}. The boundary circle \eqref{circle at infinity} (in thick black) and the ideal points on it (in black bold font) are visible.  The lines ${\rm{Per}}_1(\pm 1)$ are the loci where one of the real fixed points becomes parabolic (hence of multiplier $+1$ or $-1$); and the colored regions cut by them lie in the escape components. Between these two lines we have $(+-+)$-bimodal maps and certain curves  relevant to the proof of Theorem \ref{main}. Each entropy value $h\in\left(h_3,h_q\right]$ is realized on the purple segments but not on the broken red curve $L$, hence the disconnectedness of the level set $h_\Bbb{R}=h$.}
\label{fig:compactification}
\end{figure}

\end{proof}

We conclude the paper with the lemmas alluded to during the proof.
\begin{lemma}\label{calculus}
For any $q\geq 3$ the polynomial 
\begin{equation}\label{polynomial}
P_q(t)=t^q-2t^{q-1}+1
\end{equation}
has a unique root $r_q$ in the interval $(1,2)$. This is  the largest positive root of $P_q$ and moreover, $\left\{r_q\right\}_{q=3}^\infty$ is an strictly increasing sequence tending to $2$.
\end{lemma}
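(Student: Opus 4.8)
The plan is to analyze the polynomial $P_q(t) = t^q - 2t^{q-1} + 1 = t^{q-1}(t-2) + 1$ directly. First I would check the endpoints and a sign change: $P_q(1) = 1 - 2 + 1 = 0$, so $t=1$ is always a root, and $P_q(2) = 2^{q-1}(2-2) + 1 = 1 > 0$; meanwhile $P_q$ is negative just to the right of $1$ since its derivative there is negative (see below). This already shows a root exists in $(1,2)$ by the intermediate value theorem once I verify $P_q$ dips below zero immediately after $t=1$.

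For uniqueness of the root in $(1,2)$ and the claim that it is the largest positive root, I would factor out the known root: write $P_q(t) = (t-1)Q_q(t)$ where, dividing, $Q_q(t) = t^{q-1} - t^{q-2} - t^{q-3} - \cdots - t - 1$ (this matches \eqref{h_q} in Proposition \ref{the family}). Then it suffices to study the positive roots of $Q_q$. On $(0,\infty)$ I would consider $Q_q(t)/t^{q-1} = 1 - (t^{-1} + t^{-2} + \cdots + t^{-(q-1)}) = 1 - \frac{t^{-1} - t^{-q}}{1 - t^{-1}}$; the subtracted sum is strictly decreasing in $t$, so $Q_q(t)/t^{q-1}$ is strictly increasing on $(0,\infty)$, hence $Q_q$ has exactly one positive root $r_q$, which is therefore the unique and largest positive root of $P_q$ as well. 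Evaluating $Q_q$ (or equivalently $P_q$) at $t=1$ gives $Q_q(1) = 1 - (q-1) = 2 - q < 0$ for $q \geq 3$, and at $t=2$ gives $Q_q(2) = 2^{q-1} - (2^{q-1} - 1) = 1 > 0$; so $r_q \in (1,2)$.

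For monotonicity of the sequence $\{r_q\}$, I would compare $Q_{q+1}$ and $Q_q$ at the point $t = r_q$. Since $Q_q(r_q) = 0$, we have $r_q^{q-1} = r_q^{q-2} + \cdots + r_q + 1$, so $Q_{q+1}(r_q) = r_q^q - (r_q^{q-1} + r_q^{q-2} + \cdots + 1) = r_q^q - (r_q^{q-1} + r_q^{q-1}) = r_q^{q-1}(r_q - 2) < 0$ because $r_q < 2$. Since $Q_{q+1}$ is increasing through its unique positive root $r_{q+1}$ and is negative at $r_q$, we get $r_q < r_{q+1}$. Finally, convergence to $2$: the sequence is increasing and bounded above by $2$, so it has a limit $r_\infty \le 2$; plugging into $P_q(r_q) = r_q^{q-1}(r_q - 2) + 1 = 0$ gives $r_q^{q-1}(2 - r_q) = 1$, and if $r_\infty < 2$ then $r_q^{q-1} \le r_\infty^{q-1} \to 0$ would force the left side to $0$, a contradiction; hence $r_\infty = 2$.

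The main obstacle is the monotonicity-of-the-sequence step: one must be slightly careful to phrase the comparison in terms of the cleanly factored polynomial $Q_q$ rather than $P_q$ directly, and to use the strict monotonicity of $Q_{q+1}$ established in the uniqueness step — this is where the algebraic identity $r_q^{q-1} = r_q^{q-2} + \cdots + 1$ does the real work. Everything else is elementary calculus (sign evaluations and the monotonicity of a geometric-type sum).
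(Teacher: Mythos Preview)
Your proposal is essentially correct and takes a genuinely different route from the paper. The paper does not factor out $(t-1)$; instead it studies $P_q$ directly via its derivative, locating the unique non-zero critical point $c_q = 2(q-1)/q \in (1,2)$ and reading off from a sign table that $P_q$ is unimodal on $(0,\infty)$ with $P_q(c_q)<0$, hence has a unique root $r_q \in (c_q,2)$. The limit $r_q \to 2$ then follows for free from the squeeze $c_q < r_q < 2$, and strict monotonicity is obtained by a short contradiction argument manipulating $P_q(r_{q+1})$ and $P_{q+1}(r_{q+1})$. Your factorization $P_q = (t-1)Q_q$ combined with the observation that $Q_q(t)/t^{q-1}$ is strictly increasing is an equally clean route to uniqueness, and your direct evaluation $Q_{q+1}(r_q) = r_q^{q-1}(r_q-2)<0$ is a tidy proof of $r_q < r_{q+1}$; each approach is elementary, yours being a bit more algebraic and the paper's a bit more calculus-flavored.

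There is, however, a slip in your limit step. You write that if $r_\infty < 2$ then $r_q^{q-1} \le r_\infty^{q-1} \to 0$. But $r_\infty \ge r_3 > 1$, so $r_\infty^{q-1} \to \infty$, not $0$. The contradiction still goes through, just in the opposite direction: from $r_q^{q-1}(2 - r_q) = 1$ you get $r_q^{q-1} = 1/(2-r_q)$; if $r_\infty < 2$ the right-hand side stays bounded while the left-hand side tends to $\infty$ (since $r_q \ge r_3 > 1$). Alternatively, the paper's squeeze $r_q > 2(q-1)/q$ dispatches the limit in one line.
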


\begin{proof}
First notice that $P'_q(t)=qt^{q-1}-2(q-1)t^{q-2}=t^{q-2}\left(qt-2(q-1)\right)$; so $c_q:=\frac{2(q-1)}{q}>1$ is the only non-zero critical point.  The function $P_q$ varies as 
\begin{center}
\begin{tabular}{c|ccccccccccc}
$t$ & $-\infty$ & & $0$ & & $1$ & & $c_q$ & & $2$ & & $+\infty$\\
\hline
$P_q(t)$ & $-\infty$ & $\nearrow$ & $1$ & $\searrow$  & $0$ & $\searrow$ & $P_q(c_q)$ & $\nearrow$ & $1$ & $\nearrow$ & $+\infty$ 
\end{tabular}
\end{center}
when $q$ is odd, and as 
\begin{center}
\begin{tabular}{c|ccccccccccc}
$t$ & $-\infty$ & & $0$ & & $1$ & & $c_q$ & & $2$ & & $+\infty$\\
\hline
$P_q(t)$ & $+\infty$ & $\searrow$ & $1$ & $\searrow$  & $0$ & $\searrow$ & $P_q(c_q)$ & $\nearrow$ & $1$ & $\nearrow$ & $+\infty$ 
\end{tabular}
\end{center}
for $q$ even. We deduce that $P_q(c_q)<0$ and $P_q$ has a unique root $r_q$ different from $1$ which lies in 
$\left(c_q=\frac{2(q-1)}{q},2\right)\subset (1,2)$. We deduce that $\lim_{q\to\infty}r_q=2$. The only thing left to verify is the inequality $r_q<r_{q+1}$. Assume otherwise: $r_q\geq r_{q+1}$. But then $r_q\geq r_{q+1}>c_{q+1}>c_q$, and we know that $P_q$ is strictly increasing on $(c_q,+\infty)$.  Thus
$$
(r_{q+1})^q-2(r_{q+1})^{q-1}+1=P_q(r_{q+1})\leq P_q(r_q)=0.
$$
Multiplying both sides of the inequality above by $r_{q+1}$, we get
$$
(r_{q+1})^{q+1}-2(r_{q+1})^q+r_{q+1}\leq 0.
$$
On the other hand, $P_{q+1}(r_{q+1})=(r_{q+1})^{q+1}-2(r_{q+1})^q+1=0$. It follows immediately that $r_{q+1}\leq 1$, a contradiction to $r_{q+1}\in (1,2)$.
\end{proof}

\begin{lemma}\label{monotone along -6}
Restricted to the line $\sigma_1=-6$ -- characterized by the post-critical condition $c_0\mapsto c_1$ -- of the real moduli space, the real entropy is a decreasing function of the other coordinate $\sigma_2$. (The corresponding real entropy graph and the complex bifurcation locus could be found in Figure \ref{fig:graph} and \cite[Figure 13]{MR1246482}.)
\end{lemma}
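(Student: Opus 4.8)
The plan is to reduce the statement to a one-parameter monotonicity result for interval maps and then invoke \cite[Theorem 7.2]{2019arXiv190206732L}. First I would make the line $\sigma_1=-6$ explicit. Since it is cut out by the post-critical relation $f(c_0)=c_1$, every class on it has a representative of the form $f_a(z)=a+\frac{b}{z^2}$ with $a,b\in\Bbb{R}$ (critical points $0=c_0$ and $\infty=c_1$, with $f_a(c_0)=c_1$), and conjugation by the real scalings $z\mapsto\lambda z$, under which $(a,b)\mapsto(a/\lambda,b/\lambda^3)$, shows that $a^3/b$ is a coordinate on (the relevant sub-arc of) this line; a direct computation from \eqref{coordinates} then expresses $\sigma_2$ as a strictly monotone function of that parameter, so that ``decreasing in $\sigma_2$'' is equivalent to monotonicity in the parameter. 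In the parameter range giving $(+-+)$-bimodal dynamics, the image $J_a:=f_a(\hat{\Bbb{R}})$ is a closed interval one of whose endpoints is the critical value $f_a(c_0)=c_1$; thus the local-extremum critical point $c_1$ sits at an endpoint of $J_a$, and the restriction $g_a:=f_a\restriction_{J_a}:J_a\to J_a$ is a \emph{unimodal} interval map with its unique interior turning point $c_0$, critical value pinned at one endpoint of $J_a$, and negative Schwarzian derivative (quadratic rational maps with both critical points real have negative Schwarzian along $\hat{\Bbb{R}}$). Of course $h_{\Bbb{R}}(\langle f_a\rangle)=h_{\rm top}(g_a)$ by \eqref{real entropy function}.

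Next I would rescale each $J_a$ affinely onto a fixed interval to obtain a real-analytic one-parameter family of unimodal maps with nondegenerate critical point, and verify that it satisfies the hypotheses of \cite[Theorem 7.2]{2019arXiv190206732L}: this is precisely the kind of full family arising from a dynamically-defined curve in $\mathcal{M}_2(\Bbb{R})$ to which their entropy-monotonicity theorem applies. Granting this, $a\mapsto h_{\rm top}(g_a)$ is monotone; equivalently, by Milnor--Thurston kneading theory (with negative Schwarzian ruling out the degeneracies that could decouple entropy from kneading data) the kneading invariant of $g_a$ moves monotonically. Combined with the monotone dependence of $\sigma_2$ on the parameter from the first step, this shows $h_{\Bbb{R}}$ is monotone along $\sigma_1=-6$.

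It remains to fix the sign, which two reference points accomplish. The class $\langle f_{1/3}\rangle$ lies on this line and has entropy $h_3=\log\frac{1+\sqrt{5}}{2}$ by part \ref{h} of Proposition \ref{the family} together with Lemma \ref{calculus}, while the classes $\langle f_{1/q}\rangle$ (also on this line, since their numerator is $1$) have entropies $h_q$ with $h_q\uparrow\log 2$; comparing $\sigma_2$-coordinates shows that larger entropy corresponds to smaller $\sigma_2$. Equivalently one reads off the two ends of the sub-arc: toward ${\rm Per}_1(1)$ the Julia set becomes completely real and $h_{\Bbb{R}}\to\log 2$, while toward the real symmetry locus $f_a$ limits onto a map whose relevant branch has entropy $0$ (cf. the discussion around Figure \ref{fig:plot5}), and in the $(\sigma_1,\sigma_2)$-picture the symmetry-locus end is the one of larger $\sigma_2$. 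Either way $h_{\Bbb{R}}$ strictly decreases as $\sigma_2$ increases.

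The step I expect to be the main obstacle is the middle one: checking that the non-polynomial unimodal family $\{g_a\}$ genuinely satisfies the hypotheses of \cite[Theorem 7.2]{2019arXiv190206732L} --- in particular that it is a \emph{full} family sweeping through an interval of kneading sequences, and that the critical value being forced to an endpoint of the \emph{moving} interval $J_a$ causes no difficulty --- together with the routine but fiddly bookkeeping relating the parameter, the moving interval $J_a$, and the coordinate $\sigma_2$. Extracting \emph{strict} monotonicity is also part of this step, and is where the explicit values attained at $f_{1/3}$ and the $f_{1/q}$'s --- which certify that the family really does move through distinct entropies --- are needed.
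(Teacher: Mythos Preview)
Your proposal is essentially the same approach as the paper's: parametrize the line $\sigma_1=-6$ by a one-parameter family with critical points at $0,\infty$, observe the induced interval map is unimodal, and invoke \cite[Theorem 7.2]{2019arXiv190206732L}. The paper's execution, however, is considerably more direct and dissolves precisely the ``main obstacle'' you anticipate.

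Rather than starting with the two-parameter form $a+b/z^2$ and scaling, the paper takes the representative $z\mapsto b+\tfrac{1}{z^2}$ (one real parameter $b$) and reads off $\sigma_2=4b^3+12$ from the formulas in \cite[Appendix C]{MR1246482}; this makes the monotone correspondence between parameter and $\sigma_2$ immediate, with no bookkeeping. For $b\geq 0$ the entropy is seen to be zero by inspection (wandering on $(-\infty,0)$, monotone on $[0,+\infty]$), so only $b<0$ matters. The key step you miss is that conjugating by $x\mapsto\sqrt{-b}\,x$ puts the family in the exact form $x\mapsto a\cdot f_0(x)$ with $f_0(x)=-1+\tfrac{1}{x^2}$ and $a=(-b)^{3/2}>0$; this is \emph{precisely} the shape of family to which \cite[Theorem 7.2]{2019arXiv190206732L} applies, so there is nothing to verify about ``fullness'', moving intervals, or rescaling to a fixed interval. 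The image interval is simply $[-a,+\infty]$, containing only the critical point $0$, hence unimodal. The direction of monotonicity is automatic from the explicit relation $\sigma_2=4b^3+12$ and $a=(-b)^{3/2}$, without needing reference points like $\langle f_{1/3}\rangle$ or $\langle f_{1/q}\rangle$. Finally, note the lemma only claims ``decreasing'', not ``strictly decreasing'', so your concern about extracting strictness is moot.
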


\begin{proof}
The main idea is to parametrize the conjugacy classes along $\sigma_1=-6$ by representatives of the form $a\cdot f_0$, where $f_0$ is an appropriate map and $a \in \Bbb{R}$ lies in some suitable interval; and then use the literature on the monotonicity of entropy for  one-dimensional families in this form. Up to conjugacy, real quadratic rational maps with the post-critical condition $c_0\mapsto c_1$ are in the form of  $b+\frac{1}{z^2}$ where $b\in\Bbb{R}$. Invoking the formulas derived in \cite[Appendix C]{MR1246482}, the $(\sigma_1,\sigma_2)$-coordinates of the corresponding points of $\mathcal{M}_2(\Bbb{R})$ are given by $\sigma_1=-6$ and $\sigma_2=4b^3+12$. For $b\geq 0$ the topological entropy of 
$x\in\hat{\Bbb{R}}\mapsto b+\frac{1}{x^2}\in\hat{\Bbb{R}}$ is zero due to the fact that the points in $(-\infty,0)$ are wandering and the restriction to the invariant interval $[0,+\infty]$ is monotone decreasing. This covers the upper ray starting at 
$\left\langle\frac{1}{z^2}\right\rangle=(-6,12)$; cf. Figure \ref{fig:main}. At points
$\left\langle b+\frac{1}{z^2}\right\rangle$  with $b<0$ -- which are below $\left\langle\frac{1}{z^2}\right\rangle$ -- 
the function $h_\Bbb{R}$  eventually becomes positive because the maps belong to the $h_\Bbb{R}\equiv\log(2)$ real escape component provided that $b\ll 0$.  The linear change of coordinates $x\mapsto\sqrt{-b}x$ results in the conjugate maps $z\mapsto a\left(-1+\frac{1}{x^2}\right)$ where 
$a:=\left(\sqrt{-b}\right)^3$. For $a>0$ the image of the map 
$$x\in\hat{\Bbb{R}}\mapsto a\left(-1+\frac{1}{x^2}\right)\in\hat{\Bbb{R}}$$
is the interval $[-a,+\infty]$ which contains only one critical point, $x=0$. Therefore, one needs to establish the monotonicity of entropy for the family 
$$
\left\{x\mapsto a\left(-1+\frac{1}{x^2}\right):[-a,+\infty]\rightarrow[-a,+\infty]\right\}_{a>0}
$$
of unimodal interval maps. This immediately follows from \cite[Theorem 7.2]{2019arXiv190206732L}.
\end{proof}

\begin{figure}[ht!]
\center
\includegraphics[width=10cm]{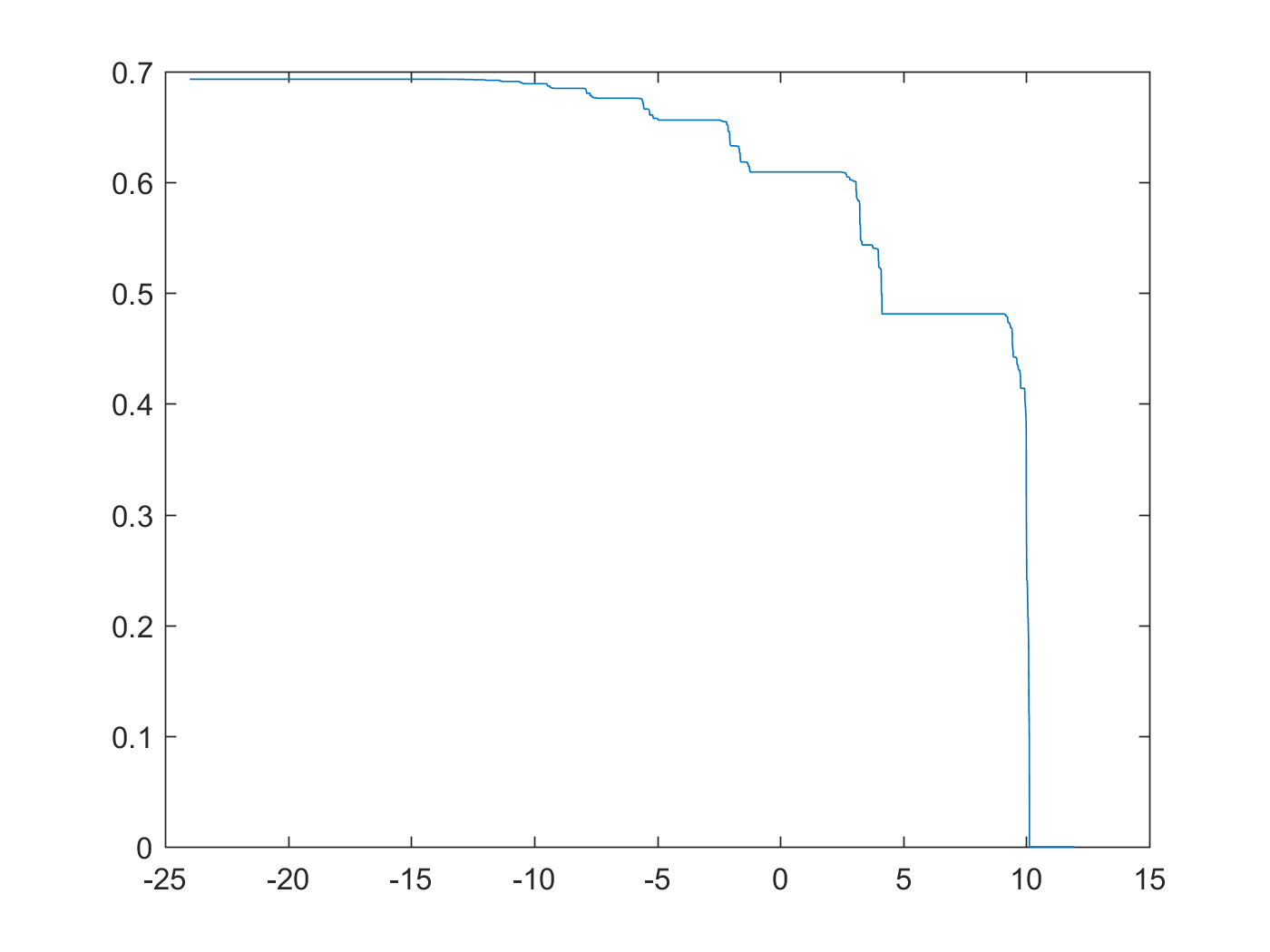}
\caption{A graph of the real entropy along the post-critical line $\sigma_1=-6$ versus the coordinate $\sigma_2$. The topological entropy has been calculated via the algorithm developed in \cite{MR1002478} for unimodal maps. The entropy is a decreasing function of $\sigma_2$; cf. Lemma \ref{monotone along -6}.}
\label{fig:graph}
\end{figure}

\begin{remark}
The line $\sigma_1=-6$ is dynamically significant because it is the locus where $c_0 \mapsto c_1$. By contrast, the entropy fails to be monotonic along arbitrary vertical lines $\sigma_1=a$ with $a\ll 0$ since such a line intersects disjoint unbounded hyperbolic components of the form $\mathcal{H}_{1/q}$ and  $\mathcal{H}_{p/q}$ which are of the same real entropy value.
\end{remark}

\begin{lemma}\label{number-theoretic}
For any $q>12$  there exists an integer $p$ coprime to $q$ satisfying 
$$1/3<p/q< 1/2.$$
\end{lemma}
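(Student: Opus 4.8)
The plan is to write down a suitable $p$ explicitly, rather than to argue by counting or sieving. The obvious candidates near the left endpoint $q/3$ (such as $\lceil q/3\rceil+1$) need not be coprime to $q$, so instead I would choose $p$ near the right endpoint $q/2$, where the greatest common divisor $d:=\gcd(p,q)$ is forced to divide the small integer $q-2p$ and can therefore be controlled by hand.

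Concretely, I would split on the parity of $q$. If $q$ is odd, set $p:=(q-1)/2\in\Bbb{Z}$; then $p<q/2$ is immediate and $p>q/3$ amounts to $q>3$, which holds since $q>12$. Moreover $d:=\gcd(p,q)$ divides $2p=q-1$ as well as $q$, hence divides $\gcd(q-1,q)=1$, so $d=1$. If $q$ is even, let $p$ be the largest odd integer with $p<q/2$, so that $p\in\{\tfrac q2-1,\ \tfrac q2-2\}$; in particular $p\ge \tfrac q2-2$, and since $q>12$ we get $\tfrac q2-2>\tfrac q3$, hence $q/3<p<q/2$. Here $d:=\gcd(p,q)$ divides $q-2p\in\{2,4\}$, so $d\mid 4$; but $d$ is odd, being a divisor of the odd number $p$, so again $d=1$. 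In either case $0<p<q/2<q$, $\gcd(p,q)=1$ and $1/3<p/q<1/2$, which is exactly what is wanted (and then $p\in\{1,\dots,q-1\}$ is coprime to $q$, consistent with Convention \ref{setting}).

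There is no genuinely difficult step here; the only point that needs attention is the even case, where $p$ must be chosen \emph{odd} --- the simpler guess $p=\tfrac q2-1$ fails when $q\equiv 2\pmod 4$ --- precisely so that the bound $d\mid 4$ can be combined with the oddness of $d$ to conclude $d=1$. It is also worth remarking that the hypothesis $q>12$ is exactly the threshold this argument requires: for $q\equiv 2\pmod 4$ the inequality $p\ge \tfrac q2-2>\tfrac q3$ first becomes available at $q=14$, whereas for $q$ odd or $q\equiv 0\pmod 4$ smaller values of $q$ would already be handled by the same recipe.
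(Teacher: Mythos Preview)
Your proof is correct and follows essentially the same approach as the paper: both choose $p=(q-1)/2$ when $q$ is odd, and for $q$ even both end up with $p=q/2-1$ when $q\equiv 0\pmod 4$ and $p=q/2-2$ when $q\equiv 2\pmod 4$. Your packaging is slightly slicker---describing the even case uniformly as ``the largest odd integer below $q/2$'' and arguing coprimality via $d\mid(q-2p)\in\{2,4\}$ together with $d$ odd---while the paper simply splits into the two residue classes mod $4$ and checks each by hand; the content is the same.
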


\begin{proof}
The  desired integer $p$ must be larger than $q/3$ and smaller than $q/2$.   For $q$ odd one can set $p=(q-1)/2$ which is larger than $q/3$ provided that $q>3$. If $q\equiv 2 \pmod{4}$, the integer $p:=q/2-2$ is coprime to $q$. It is larger than $q/3$ provided that $q>12$. Finally, in the case of  $q\equiv 0 \pmod{4}$, one can choose 
$p:=q/2-1$ which satisfies  $\gcd(p,q)=1$, and also $p>q/3$ when $q>6$.
\end{proof}

\bibliographystyle{alpha}
\bibliography{bibcomprehensive}

\end{document}